\newtheorem{lemma}{Lemma}
\newtheorem{claim}[lemma]{Claim}
\newtheorem{theorem}[lemma]{Theorem}
\newtheorem{conjecture}[lemma]{Conjecture}
\newtheorem{definition}[lemma]{Definition}
\newtheorem{proposition}[lemma]{Proposition}
\newcommand{\Length}{\mathrm{Length}}
\newcommand{\modd}{\mathrm{mod}}
\author{	J\'ozsef Balogh} \thanks{Department of Mathematical Sciences, University of Illinois at Urbana-Champaign, Urbana, Illinois 61801, USA, 
and Moscow Institute of Physics and Technology,  Russian Federation. Partially supported by NSF
Grants DMS-1500121 and DMS-1764123, Arnold O. Beckman Research Award (UIUC Campus Research Board RB 18132) and
the Langan Scholar Fund (UIUC)}
	\author {Gal Kronenberg}
	\thanks{School of Mathematical Sciences, Raymond and Beverly Sackler Faculty of Exact Sciences, Tel Aviv University, Tel Aviv, 6997801, Israel. Email: galkrone@mail.tau.ac.il}
	\author {Alexey Pokrovskiy} \thanks{Department of Economics, Mathematics, and Statistics, Birkbeck,
University of London, United Kingdom. Email:
Dr.Alexey.Pokrovskiy@gmail.com.}
	\author {Tibor Szab\'o} \thanks{Institute of Mathematics, FU Berlin, 14195 Berlin; e-mail: szabo@math.fu-berlin.de. Research supported in part by GIF grant No. G-1347-304.6/2016.}
\date{\today}
\title{The maximum length of $K_r$-Bootstrap Percolation}
\begin{document}
	\maketitle

\begin{abstract}
	  \textit{Graph-bootstrap percolation}, also known as weak saturation, was introduced by Bollob\'as in 1968. In this process, we start with initial ``infected" set of edges $E_0$, and we infect new edges according to a predetermined rule. Given a graph $H$ and a set of previously infected edges $E_t\subseteq E(K_n)$, we infect a non-infected edge $e$ if it completes a new copy of $H$ in $G=([n],E_t\cup e)$. A question raised by Bollob\'as asks for the maximum time the process can run before it stabilizes.  Bollob\'as,  Przykucki,  Riordan, and  Sahasrabudhe considered this problem for the most natural case where $H=K_r$. They answered the question for $r\leq 4$ and gave a non-trivial  lower bound for every $r\geq 5$. They also conjectured that the maximal running time is $o(n^2)$ for every integer $r$. 
	  In this paper we disprove their conjecture for every $r\geq 6$ and we give a better lower bound for the case  $r=5$; in the proof   we use the Behrend construction.
\end{abstract}	 
	
	\section{Introduction}
	
	Weak saturation of graphs was introduced by Bollob\'as~\cite{weakSat} in 1968. A graph $G$ on $n$ vertices is \textit{weakly saturated} with respect to a graph $H$, if $G$ has no copies of $H$, but there exists an ordering of $E(K_n)\setminus E(G)=\{e_1,\dots,e_t\}$ such that the addition of $e_i$ to $G\cup \{e_1,\dots,e_{i-1}\}$ will create a new copy of $H$, for every $i\in [t]$. It was later noticed by  Balogh, Bollob\'as and Morris~\cite{GraphBootstrapPer} that weak saturation is strongly related to the so-called \textit{bootstrap percolation process}, which is a type of cellular automata  introduced in 1979 by Chalupa, Leath, and Reich~\cite{Chalupa1979}, see also~\cite{aizleb,grid}.

	For our setting, we first redefine the notion of a weakly saturated graph in terms
	of an infection process, known as the \textit{graph-bootstrap percolation}. 
	For graphs $F,H$ we describe the \textit{$(F,H)$-bootstrap process} as follows. We start with an initial \textit{infected set of edges} $E_0\subseteq E(F)$ and write $G_0 :=(V(F),E_0)$ (sometimes called the \textit{starting graph}). At each step, an edge of $F$ becomes infected if it completes an infected copy of $H$. More formally, denote by $n_H(G)$ the number of copies of $H$ in a graph $G$. Let	
	$$G_t=G_{t-1}\cup\{ e\in E(F) \mid n_H(G_{t-1}\cup \{e\}) >n_H(G_{t-1}) \}\quad  \text{  and } \quad E_t=E(G_t).$$
	 We say that the \textit{running time} of the $(F,H)$-bootstrap process is $t$, if $t$ is the minimum integer  such that $G_{t+1}=G_t$. In this case we say that $\langle G_0 \rangle_{(F,H)}:=G_t$ is the \textit{final graph}
	 and the process {\it stabilizes} in $t$ steps. We say
	that $E_0$ \textit{percolates} if every edge of $F$ is eventually infected, that is, if  the final graph $\langle G_0 \rangle_{(F,H)}=F$.
	In the special case when $F=K_n$, we refer to the $(K_n,H)$-bootstrap percolation process as the $H$-process. 

The origins of the concept in physics involved investigating the threshold phenomena of percolation in various random setups~\cite{aizleb,grid}. Balogh,  Bollob\'as,  and  Morris~\cite{GraphBootstrapPer} studied the threshold probability $p_c(n,H)$ for the event that  the Erd\H os-R\'enyi random graph $G(n,p)$  percolates in the $H$-bootstrap percolation process.
 It is easy to see that a starting graph $G_0$ on $n$ vertices percolates in the $K_3$-process if and only if $G_0$ is connected, so $p_c(n,K_3)=\frac{\log{n}}{n}+\Theta\left(\frac{1}{n}\right)$
by the result of Erd\H{o}s and R\'enyi~\cite{ErdosRenyi} about the threshold for connectivity of $G(n,p)$.
Determining the value of $p_c(n,K_r)$ is much more difficult when $r\geq 4$. It was shown in~\cite{GraphBootstrapPer} that $p_c(n,K_4)=\Theta\left(\sqrt{\frac{1}{n\log n}}\right)$, and that for $r\geq 5$, $ \frac {n^{-1/\lambda(r)}}{2e\log n}\leq  p_c(n,K_r) \leq n^{-1/\lambda(r)}\log n $, where $\lambda(r)=\frac{\binom{r}{2}-2}{r-2}$.
	The bounds on $p_c(n,K_4)$ was later improved in~\cite{AngelKolesnikK4,KolesnikSharpK4}. Recently, this problem was also studied for more general graphs $H$ (see also~\cite{MorrisionNoelHypergraph}).

Much work has been done on the extremal properties of the $K_r$-process. 
Alon~\cite{AlonWeakSat1985}, Frankl~\cite{FranklWeakSat1982}, and Kalai~\cite{KalaiWeakSat1984}, showed that the smallest percolating set of edges in the $K_r$-process in $K_n$ has size $\binom{n}{2}-\binom{n-r+2}{2}$, realized for example by $K_n-K_{n-r+2}$,  thus confirming a conjecture of Bollob\'as~\cite{weakSat}. 
This question was also studied for other graphs $F$ and $H$, see~\cite{AlonWeakSat1985,LinearAlgebraBootstrapPer,BaloghPete1998,MorrisionNoelHypercube}.
	
Despite missing almost all the ${n\choose 2}$ edges of $K_n$, the $K_r$-percolating starting graph $K_n-K_{n-r+2}$ percolates very fast: every non-edge is the sole missing edge from a copy of $K_r$, so is added simultanously in the very first step of the process. 
Nevertheless, in some of the  applications the speed of percolation is quite relevant.  
In this direction Bollob\'as raised the extremal problem of determining the \textbf{slowest} percolating set in the bootstrap process (i.e.~the one that  has the maximum running time).  Benevides and Przykucki~\cite{BenevidesPrzykucki1,BenevidesPrzykucki2,Przykucki0} studied this problem in the related setting of neighborhood percolation. Gunderson, Koch and Przykucki \cite{TimeOfBootstrapRandom} considered a `percolation
time' problem in the random setting.
The question for the $K_r$-process on the edges was investigated independently by Bollob\'as, Przykucki, Riordan and Sahasrabudhe~\cite{MaxRunTime} and Matzke~\cite{Matzke}. They defined
$$M_r(n)=\max \{t \mid \text{$\exists G_0\subseteq K_n$ such that $G_t\neq G_{t-1}$ in the $K_r$-bootstrap process} \}$$
to be the maximum running time for the $K_r$-bootstrap percolation on $n$ vertices until it stabilizes, taken over all starting graphs. 
It is easy to see that in the $K_3$-process the diameter of the infected graph decreases at least by a factor of  two in each step, and hence 
$M_3(n) = \lceil \log_2 (n-1)\rceil$. For the $K_4$-process the precise answer was found and turned out to be linear in $n$.
 
	\begin{theorem}[\cite{MaxRunTime,Matzke}]
		$M_4(n)=n-3$ for all $n\geq 3$.
	\end{theorem}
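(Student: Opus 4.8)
The plan is to prove the two inequalities $M_4(n)\ge n-3$ and $M_4(n)\le n-3$ separately; the lower bound is an explicit construction and the upper bound is the harder half.

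\textbf{Lower bound.} I would exhibit, for every $n\ge 4$, a starting graph $G_0$ on $[n]$ whose $K_4$-process runs for exactly $n-3$ steps (for $n=3$ no $K_4$ can ever appear, so $M_4(3)=0=n-3$). Take $G_0$ to be a dense \emph{core}, namely a $K_4$ on $\{1,2,3,4\}$ with the single edge $\{3,4\}$ removed, together with a \emph{tail}: for each $k\in\{5,\dots,n\}$ attach vertex $k$ to the two vertices $k-1$ and $p_k$, where $p_5=3$ and, for $k\ge 6$, $p_k=1$ if $k$ is even and $p_k=2$ if $k$ is odd. The role of the choice of $p_k$ is that for every $k$ the two $G_0$-neighbours $k-1$ and $p_k$ of $k$ are \emph{non-adjacent} in $G_0$, so the tail contains no triangle and hence creates no ``shortcuts.'' The main claim, proved by induction on $t$, is that
$$E(G_t)=\binom{[t+3]}{2}\ \cup\ \bigcup_{k=t+4}^{n}\bigl\{\{k-1,k\},\{p_k,k\}\bigr\}\qquad (1\le t\le n-3),$$
i.e.\ at step $t$ exactly the vertex $t+3$ gets absorbed into a growing clique on $\{1,\dots,t+3\}$, and nothing else happens; in particular $G_{n-3}=K_n$ and $G_t\supsetneq G_{t-1}$ for each $t\le n-3$, so the process stabilises in exactly $n-3$ steps.

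The inductive step is short. The base case $G_0\to G_1$ is the single addition $\{3,4\}$, witnessed by the edge $\{1,2\}$. For the step $t\to t+1$ (with $t\ge1$): once $[t+2]$ is a clique and vertex $t+3$ is joined in $G_{t}$ only to $p_{t+3}$ and $t+2$ within $[t+2]$, and these two are adjacent, the edge $\{t+3,w\}$ is completed into a $K_4$ (on $\{t+3,w,p_{t+3},t+2\}$) for every $w\in[t+2]\setminus\{p_{t+3},t+2\}$, so $t+3$ becomes adjacent to all of $[t+2]$. One then checks that no vertex $k\ge t+4$ can gain any edge at this step (both of its two current neighbours $k-1$ and $p_k$ are still non-adjacent in $G_t$, because $\{p_k,k-1\}$ is neither in $\binom{[t+3]}{2}$ nor one of the surviving ``dangling'' edges), nor can such a vertex serve as a witness, since a witness vertex needs three neighbours and it has only two. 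This pins down $G_{t+1}$ exactly.

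\textbf{Upper bound.} Here I would argue that an edge added late forces many vertices to be present. Suppose an edge $e_t=\{u,v\}$ is added at step $t$; then in $G_t$ there is a $K_4$ on some $\{u,v,x,y\}$ of which $e_t$ is the last edge to appear, and at least one of the other five edges, call it $e_{t-1}$, was added at step $t-1$ (otherwise $e_t$ would already have been addable at step $t-1$). Iterating, one obtains a dependency chain $e_t,e_{t-1},\dots,e_1,e_0$ with $e_s$ added at step $s$ (and $e_0\in G_0$) such that $e_{s-1}$ and $e_s$ lie in a common $K_4$, say $Q_s\subseteq G_s$. Since $\bigl|\bigcup_s V(Q_s)\bigr|\le n$, it suffices to show $\bigl|\bigcup_s V(Q_s)\bigr|\ge t+3$, which gives $t\le n-3$. (One may alternatively package this as an induction $M_4(n)\le M_4(n-1)+1$, deleting a vertex that is ``needed only at the last step,'' but the combinatorial core is the same.)

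\textbf{The main obstacle} is exactly to show that this chain cannot fold back on itself: a priori $Q_s$ could reuse vertices already seen in $Q_{s+1},\dots,Q_t$, so one cannot simply charge one fresh vertex to each step. Ruling this out forces one to exploit the specific geometry of the $K_4$-rule --- the freedom in choosing which witnessing $K_4$ and which step-$(s-1)$ edge to continue the chain with, together with tracking a monotone quantity (such as the degree of $v$, or an ordering/parity invariant on the vertices entering the chain). An appealing clean target is: \emph{if an edge is added at step $t$, then some vertex has degree at least $t+2$ in $G_t$}, which would immediately give $t+2\le n-1$; the natural attempt is strong induction on $t$ through the chain, and the delicate case is precisely when the recently-added edge $e_{s-1}$ is incident only to the ``witness'' vertices and not to the vertex whose degree one is tracking. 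It is this rigidity of $K_4$ --- the inability of the process to ``spread out slowly'' --- that breaks down for $K_r$ with $r\ge 6$, where the running time is shown in this paper to be superlinear.
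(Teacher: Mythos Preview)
The paper does not give its own proof of this theorem; it is quoted as a known result from \cite{MaxRunTime} and \cite{Matzke}. So there is nothing in the present paper to compare against, and I comment directly on your proposal.

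\medskip

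\textbf{Lower bound.} Your construction is essentially correct. Two small slips: (a) in your inductive step ``$t\to t+1$'' the indices are shifted by one relative to your displayed formula for $E(G_t)$; (b) a tail vertex $k<n$ actually has \emph{three} neighbours $k-1,\,p_k,\,k+1$, not two. Point (b) does not harm the argument, because the correct justification for ``$k$ cannot serve as a witness'' is not ``$k$ has only two neighbours'' but rather ``no two of $k$'s neighbours are adjacent in $G_t$'': indeed $k-1\not\sim k+1$, $p_k\not\sim k+1$ (since $p_{k+1}\neq p_k$), and $p_k\not\sim k-1$ by your design, as long as $k-1$ is still in the tail. With these cosmetic fixes the induction goes through and yields $M_4(n)\ge n-3$.

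\medskip

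\textbf{Upper bound.} Here there is a genuine gap. What you have written is a strategy, not a proof: you set up the dependency chain $Q_t,Q_{t-1},\dots$ of witnessing $K_4$'s, correctly identify the obstacle (the chain can revisit vertices, so one cannot naively charge a new vertex to each step), propose a degree-tracking target, and then explicitly flag the ``delicate case'' as unresolved. Nothing in your text excludes a chain of length $>n-3$ that folds back; the inequality $\bigl|\bigcup_s V(Q_s)\bigr|\ge t+3$ is asserted but not established. The published proofs in \cite{MaxRunTime} and \cite{Matzke} do close this gap, but the argument is more structural than your local heuristic: one has to track how the \emph{maximal cliques} of $G_t$ evolve under the $K_4$-rule (roughly, two cliques sharing a triangle merge, and one controls how many merge events can occur), rather than follow a single vertex's degree. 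Until you supply such an argument, the upper bound $M_4(n)\le n-3$ remains unproved in your write-up.
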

In ~\cite{MaxRunTime} for $r\geq 5$  
subquadratic polynomial lower bounds with the exponents tending to $2$ 
as $r$ tends to infinity were given (see also \cite{Matzke} for $r=5$).
\begin{theorem}[Theorem 2 in~\cite{MaxRunTime}]\label{thm:MaxRunTime}
	For each fixed $r\geq 5$, we have $M_r(n)\geq n^{2-\frac{1}{\lambda (r)}-o(1)}$  as $n\to \infty$, where $\lambda (r)=\frac{\binom r2-2}{r-2}$.
\end{theorem}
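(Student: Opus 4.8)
The plan is to construct, for each fixed $r\ge 5$, an explicit $K_r$-free starting graph $G_0$ on $n$ vertices whose $K_r$-bootstrap process runs for $n^{2-1/\lambda(r)-o(1)}$ steps. The unifying heuristic is the one already visible in the excerpt: the threshold for $K_r$-percolation sits at density roughly $n^{-1/\lambda(r)}$ (up to the logarithmic factors in the bounds quoted above), precisely because $\lambda(r)=\tfrac{\binom r2-2}{r-2}$ is the density at which the ``near-copies'' of $K_r$ that drive the infection first become plentiful. So one should work at this critical density and make the cascade as slow as possible. Since the running time is always at least (total number of edges ever infected)$/$(largest number of edges infected in a single step), it suffices to design $G_0$ so that the process (i) infects $n^{2-1/\lambda(r)-o(1)}$ edges and (ii) infects at most $n^{o(1)}$ edges in any single step; the running time is then at least $n^{2-1/\lambda(r)-o(1)}$.

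To achieve (i) and (ii) I would build $G_0$ as a long, deliberately thin cascade. Take a sparse ``scaffold'' $S$ of density $n^{-1/\lambda(r)+o(1)}$ assembled from many overlapping copies of $K_r$ with one or two edges deleted, all sharing a common core of about $n^{1/\lambda(r)}$ vertices; this shared core is what allows the remaining $\approx n$ vertices to take part in about $n^{2-1/\lambda(r)}$ distinct potential completions -- rather than only linearly many -- which is exactly what makes the eventual cascade that long. One then prescribes an order $e_1,e_2,\dots,e_T$, $T=n^{2-1/\lambda(r)-o(1)}$, in which these completions fire, arranged so that each $e_{i+1}$ is the unique missing edge of a copy of $K_r$ whose other $\binom r2-1$ edges already lie in $S\cup\{e_1,\dots,e_i\}$, while the ``dependency digraph'' on the $e_j$ has bounded width, so that only $n^{o(1)}$ of them are ever simultaneously available. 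Chaining the completions through the shared core (rather than laying them out along vertex-disjoint pieces, which would give only linearly many) is what turns a linear-length process into one of length $n^{2-1/\lambda(r)}$, and keeping the dependency digraph thin is what prevents the process from racing through those $\approx n^{2-1/\lambda(r)}$ edges in a handful of huge simultaneous steps.

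The main obstacle is the extremal/counting lemma underlying both (i) and (ii): one must certify that $G_0$ is genuinely $K_r$-free, so that every $e_j$ is a bona fide new infection, and -- much more delicately -- that the cascade never short-circuits, i.e.\ that no copy of $K_r$ is completed ahead of its scheduled time and that no step triggers more than $n^{o(1)}$ new edges. This is where the exponent $\lambda(r)$ must be used in earnest: a supersaturation argument shows that below density $n^{-1/\lambda(r)}$ there are simply too few near-copies of $K_r$ for a cascade of length $n^{2-1/\lambda(r)}$ to exist at all, so the scaffold must be tuned to sit right at this density, where near-copies are abundant enough to be chained but still sparse enough that only $n^{o(1)}$ can mature at once. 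Bounding the per-step width is the crux, and it is the exact analogue, for the maximum-time problem, of the delicate counting behind the bound $p_c(n,K_r)\le n^{-1/\lambda(r)}\log n$; once that is in place, the rest -- verifying $K_r$-freeness of $S$, checking the vertex budget $|V(S)|\le n$, and the arithmetic $T\approx n^2/n^{1/\lambda(r)}$ -- is bookkeeping.
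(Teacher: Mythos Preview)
This theorem is not proved in the present paper at all: it is quoted verbatim as ``Theorem~2 in~\cite{MaxRunTime}'' and serves only as background against which the new results (Theorems~\ref{thm:mainKr} and~\ref{thm:mainK5}) are compared. There is therefore no proof here to compare your proposal against. The paper does remark in passing that the original argument in~\cite{MaxRunTime} is a \emph{random} construction (``in~\cite{MaxRunTime} a random construction (though different from $G(n,p_c(n,K_r))$) is used to obtain the lower bound on $M_r(n)$''), which already differs from the explicit/deterministic scaffold you are sketching.

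Setting that aside, your proposal is not yet a proof but a narrative of intentions. The decisive step you identify yourself --- ``bounding the per-step width is the crux'' --- is left entirely open: you assert that at density $n^{-1/\lambda(r)}$ only $n^{o(1)}$ completions can mature per step, but give no mechanism (deterministic or probabilistic) for why an adversarially built scaffold would have this property, and the analogy to the upper bound on $p_c$ goes the wrong way (that bound says many completions \emph{do} occur at this density, which is the opposite of what you need). Likewise the claim that a ``shared core of about $n^{1/\lambda(r)}$ vertices'' can host $n^{2-1/\lambda(r)}$ chained near-copies of $K_r$ without creating unintended copies of $K_r$ or $K_r^-$ is the whole difficulty, and you do not indicate how it would be verified. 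If you want to reconstruct the actual argument, you should look at~\cite{MaxRunTime} directly; the present paper bypasses the exponent $2-1/\lambda(r)$ altogether via the explicit hypergraph constructions in Sections~\ref{sec:Kr} and~\ref{sec:K5}.
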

	
Note the reappearance of the constant $\lambda(r)$ from the bounds on $p_c(n,K_r)$ mentioned above (see also \cite{GraphBootstrapPer}). Consequently the lower bound in Theorem~\ref{thm:MaxRunTime} is around the number when a typical set of that many edges starts to percolate. This is so, as in \cite{MaxRunTime} a random construction (though different from $G(n,p_c(n,K_r))$) is used to obtain the lower bound on $M_r(n)$.  
As for an upper bound, in \cite{MaxRunTime} it was conjectured that the running time of any $K_r$-bootstrap percolation process should be subquadratic for $r\geq 5$.
	\begin{conjecture}\label{conj:main}\cite{MaxRunTime} 
		 For all $r \geq 5$ we have $M_r(n) = o(n^2)$.
	\end{conjecture}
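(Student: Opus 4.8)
The plan is to try to beat the trivial bound $M_r(n)\le\binom n2$ by showing that the process cannot spend too many of its steps infecting only a handful of edges. Since every step before stabilization infects at least one new edge, it suffices to bound the number of \emph{slow} steps, say those infecting at most $\varepsilon n$ edges; any run consisting of fast steps is automatically $O(\varepsilon^{-1}n)=o(n^2)$ long. The model to emulate is the $K_4$-process, where $M_4(n)=n-3$ is proved by exhibiting a width-type parameter of the evolving graph $G_t$ that strictly decreases at every step. The hope is to isolate, for each $r$, an analogous bounded monovariant $\Phi(G_t)$ whose decrements at slow steps are not too small, so that the total number of slow steps is forced to be $o(n^2)$.

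Concretely, I would fix a maximal time-interval $[t_0,t_1]$ during which every step infects exactly one edge, list the newly infected edges as $e_1=u_1v_1,\dots,e_m=u_mv_m$ in order of infection, and for each $i$ fix a \emph{witness} $r$-set $S_i\ni u_i,v_i$ such that every edge of $K_r[S_i]$ other than $e_i$ is already infected by time $t_0+i-1$. Two structural facts come for free: first, $e_j\notin\binom{S_i}2$ whenever $j<i$ (otherwise $e_j$, being present in $S_j$'s witness strictly before its own infection time, would force $e_i$ to be infected too early); second, the subgraph induced on $S_i$ with $e_i$ deleted is ``locally $K_r$-saturated'' at time $t_0+i-1$. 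The aim is then an amortization/double-counting argument: track a quantity such as a BFS-type distance sum over the still-uninfected pairs in $G_t$, or the number of ``one-edge-short'' copies of $K_{r-1}$, and use the two facts above to show it drops by a definite amount each time some $e_i$ is added, which forces $m=o(n^2)$ and hence $t_1-t_0=o(n^2)$; summing over all maximal slow intervals and interleaving with the fast steps then gives $M_r(n)=o(n^2)$.

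The hard part — and the reason this program is delicate — is that for $r\ge5$ the witness sets $S_i$ need not be concentrated anywhere: their pairwise intersections can be as small as a copy of $K_r$ permits, and the freshly infected edges $e_1,\dots,e_m$ can be arranged so that each $S_i$ contains only a bounded number of them, so no local contraction is visible and every candidate monovariant has decrements that are $o(1)$ on average. This obstruction appears not to be a technical artifact but the crux of the matter: exactly the additive-combinatorial freedom in placing the sets $S_i$ — via progression-free, Behrend-type configurations, as exploited elsewhere in this paper — is what permits chaining $\Omega(n^2)$ single-edge steps together, so for $r\ge6$ no monovariant of the required kind should exist, and the right conclusion is that the conjecture is false rather than provable. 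The most one could hope to salvage along these lines is, for $r=5$, an upper bound with exponent strictly below $2$, roughly matching the shape of the lower bound in Theorem~\ref{thm:MaxRunTime}.
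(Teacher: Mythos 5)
The statement you were asked about is a conjecture from \cite{MaxRunTime}, and this paper's main theorem \emph{disproves} it for every $r\geq 6$: Theorem~\ref{thm:mainKr} exhibits starting graphs witnessing $M_r(n)\geq n^2/2500$. So there is no proof to compare your attempt against; your monovariant program was doomed from the start for $r\geq6$, and to your credit you arrive at exactly this diagnosis in your final paragraph. Your intuition about where the program breaks --- witness sets $S_i$ that pairwise intersect in as few vertices as a copy of $K_r$ permits, each containing only a bounded number of the freshly infected edges, so that $\Omega(n^2)$ single-edge steps can be chained together --- is precisely the mechanism of the paper's counterexample. The paper realizes it with a $6$-uniform hypergraph $\mathcal H(n)$ on four vertex classes indexed modulo $n$ and modulo $n+20$, whose $m=\lfloor n^2/100\rfloor$ hyperedges play the role of your witness sets: consecutive hyperedges $e_t,e_{t+1}$ share exactly one pair $f_t$, no such pair lies in any other hyperedge, and an induced-$K_6^-$-freeness argument (Claims~\ref{clique2}--\ref{cor:InducedKrfree}, fed into the Key Lemma~\ref{Lem:KeyLemma}) guarantees that the $2$-skeleton minus $\{f_t\}$ infects exactly one edge per step for $m$ steps. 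What your proposal is missing, then, is not a better monovariant but the construction itself: a heuristic argument that no monovariant ``should exist'' is not a disproof, and as written your text establishes neither the conjecture nor its negation.

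Two further corrections. First, the Behrend-type, progression-free configurations you invoke as the source of the obstruction are used in the paper only for $r=5$; for $r\geq6$ the counterexample is purely arithmetic (indices taken modulo the two moduli $n$ and $n+20$, with Lemma~\ref{NumberTheoryLemma} doing the disentangling) and gives a genuinely quadratic $\Omega(n^2)$ bound, not merely $n^{2-o(1)}$. Second, your closing hope of salvaging ``an upper bound with exponent strictly below $2$'' for $r=5$ is ruled out by Theorem~\ref{thm:mainK5}: $M_5(n)\geq n\,r_3(n)/1200 \geq n^{2-O(1/\sqrt{\log n})}$, so no bound of the form $O(n^{2-\varepsilon})$ with fixed $\varepsilon>0$ can hold. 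The conjecture remains open for $r=5$ only in the weaker form $M_5(n)=o(n^2)$, and the paper notes that its truth would tie $M_5$ tightly to the additive-combinatorial function $r_3(n)$.
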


In our first main result, we disprove Conjecture~\ref{conj:main} for all $r\geq 6$.

	\begin{theorem}
		\label{thm:mainKr}
		For every $r\geq 6$ and large enough $n$, we have $M_r(n) \geq \frac{n^2}{2500}$.
	\end{theorem}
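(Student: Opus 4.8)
To prove Theorem~\ref{thm:mainKr} I would split the argument in two: a reduction of every $r\ge 6$ to the case $r=6$, and an explicit starting graph for which the $K_6$-process is forced to crawl through $\Theta(m^2)$ steps.

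\emph{Reduction to $r=6$.} Suppose we have constructed, for every large $m$, a graph $G_0$ on $m$ vertices whose $K_6$-process has running time at least $m^2/C_0$ for some absolute constant $C_0<2500$. For $r>6$ put $m=n-(r-6)$ and form $G_0'$ from $G_0$ by adding $r-6$ new vertices, all adjacent to one another and to every vertex of $V(G_0)$. Every edge of $G_0'$ incident to a new vertex is present from the start and never matters; and for $e\subseteq V(G_0)$, adding $e$ to a supergraph $G'$ of $G_0'$ creates a new $K_r$ if and only if it creates a new clique of size at least $6$ inside $G'[V(G_0)]$ — which, since any new clique of size $\ge 6$ through $e$ contains a new $K_6$ through $e$ and conversely, is the same as creating a new $K_6$ inside $G'[V(G_0)]$. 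Hence the $K_r$-process on $G_0'$, restricted to edges inside $V(G_0)$, runs identically step by step to the $K_6$-process on $G_0$, so $M_r(n)\ge m^2/C_0=(n-(r-6))^2/C_0$, which exceeds $n^2/2500$ once $n$ is large, as $r$ is fixed and $C_0<2500$. It thus suffices to build the $K_6$-construction.

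\emph{The $K_6$-construction.} I would describe $G_0$ on vertex set $[m]$ together with an explicit list $e_1,\dots,e_T$ of non-edges, with $T\ge m^2/C_0$ for an absolute $C_0<2500$, arranged so that the $K_6$-process infects exactly the edge $e_i$ at step $i$. The guiding picture is a two-dimensional lift of the linear construction realizing $M_4(m)=m-3$: index the scheduled edges by pairs $(i,j)$, with $i$ and $j$ each ranging over a set of size $\Theta(m)$, take the lexicographic order, and arrange that infecting the edge with index $(i,j)$ requires, and is triggered by, a bounded-size ``certificate'' of previously infected edges witnessing that every row below $i$ is finished and, within row $i$, every column below $j$ is finished. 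One natural mechanism: four of the six vertices of each completed $K_6$ act as the $K_4$-gadget that advances the column, and the remaining two advance the row — $r=6$ being exactly where one first has two such ``free'' vertices, which is what upgrades the schedule from linear to quadratic. Concretely, two properties have to be secured: (a) for every $i$, once $e_1,\dots,e_{i-1}$ are present, $e_i$ is the unique missing edge of some new copy of $K_6$; and (b) for every $i$, the graph $G_0\cup\{e_1,\dots,e_{i-1}\}$ has \emph{no} copy of $K_6$ missing exactly one edge other than $e_i$.

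\emph{Verification, and the main difficulty.} Property (a) is a routine local check: for each $i$ one names the four companion vertices of $e_i$ and verifies that the other $14$ edges of the corresponding $K_6$ either lie in $G_0$ or occur earlier in the list. Property (b) is the heart of the matter, and is where I expect the bulk of the work. The plan is to pin down the ``state at time $t$'' exactly — ideally by assigning coordinates to the vertices so that the edge set present at each step is precisely the set of pairs whose labels satisfy one explicit monotone (threshold-type) inequality — and then to show that any copy of $K_6$ in such a threshold graph is confined to a short window of the ordering, forcing the unique edge it can be missing to be the scheduled next edge $e_i$. This simultaneously rules out ``short-cuts'' (so the process genuinely spends $T\ge m^2/C_0$ steps) and, taken to the end, shows $G_0$ infects at least $e_1,\dots,e_T$. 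The genuine obstacle is the tension inside property (b): the labels must be rich enough that no premature ``$K_6$ minus an edge'' ever appears, yet (a) must stay satisfiable and $T$ must remain a constant times $m^2$ with the constant comfortably below the target. (For $r=5$ only three vertices are free once a $K_4$-gadget is fixed, the analogous schedule has length only $m^{2-o(1)}$, and recovering a near-quadratic bound there is exactly where a Behrend-type $3$-term-progression-free set enters; this is why $r\ge 6$ is the clean case.)
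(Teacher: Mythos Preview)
Your reduction from general $r\ge 6$ to $r=6$ is correct and is exactly the paper's Proposition~\ref{prop:reduction} applied $r-6$ times. Your framework for the $K_6$ case --- an ordered list of pairs $f_1,\dots,f_T$ sitting inside $6$-sets $e_1,\dots,e_T$ with $f_i\subseteq e_i\cap e_{i+1}$, together with the demand that no other $K_6^-$ appears in the running graph --- is also precisely the paper's Key Lemma (Lemma~\ref{Lem:KeyLemma}). So the scaffolding is right.

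The gap is that you never actually give the construction. You write ``I would describe $G_0$\ldots'', sketch a ``two-dimensional lift'' with lexicographic indexing, and then explicitly flag property~(b) as ``the heart of the matter'' and ``where I expect the bulk of the work'' --- without doing that work. The tension you identify is real: on $m$ vertices you need $\Theta(m^2)$ hyperedges $e_t$ that overlap in the controlled chain pattern of~(a) yet whose $2$-skeleton is induced $K_6^-$-free, and a generic ``threshold-type'' labelling will not deliver this. The paper resolves it with a specific and non-obvious device: take four vertex classes $X,Y$ of size $n$ and $Z,W$ of size $\ell=n+20$, and for $0\le t<\lfloor n^2/100\rfloor$ set
\[
e_t=\{x_{t},\,x_{t+1},\,y_{t+1},\,z_{t},\,z_{t+1},\,w_{t+1}\},\qquad f_t=\{x_{t+1},\,z_{t+1}\},
\]
with the $X,Y$-indices read modulo $n$ and the $Z,W$-indices read modulo $\ell$. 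The two incommensurate moduli are what produce $\Theta(n^2)$ distinct hyperedges on $\Theta(n)$ vertices, and a short Chinese-Remainder-type calculation (Lemma~\ref{NumberTheoryLemma}) is what certifies that any $K_5$ in the $2$-skeleton lies inside some $e_t$ and that distinct $e_t,e_{t'}$ share at most three vertices, whence induced $K_6^-$-freeness. Until you supply a concrete construction of this kind and verify your property~(b) for it, what you have is a correct outline of the obligations, not a proof.
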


Our construction of the starting graph for the slow $K_r$-process does not obviously extend to $r=5$. Nevertheless, some of the ideas can be salvaged by utilizing sets of integers having no arithmetic progression of length three. Using the relevant constructions from additive number theory 
allows us to improve the lower bound of ~\cite{MaxRunTime} for $M_5(n)$ to almost quadratic.

A set $B$ of numbers is called {\em $3$-AP-free} if for any $b_1, b_2, b_3 \in B$ with $2b_1 = b_2+b_3$, we have $2b_1 = b_2+b_3$. Denote  $r_3(n)$  the largest cardinality of a $3$-AP-free subset of $[n]$. Determining $r_3(n)$ (and its generalization for $k$-AP-free subsets) is a cornerstone problem  in  additive number theory, with a rich history that also involves inspiring a significant portion of modern combinatorics. Behrend~\cite{Behrend} showed that there are
$3$-AP-free subsets of $[n]$ of size $n^{1-1/O(\sqrt{\log n})}$. From the other side
the function $r_3(n)$ was shown to be $o(n)$ by Roth in 1953 using analytic number theory. 
Later this was also proved by various other methods, including combinatorics, ergodic theory, and non-standard analysis.
 
Here we connect $r_3(n)$ to the extremal function $M_5(n)$ of slow $K_5$-bootstrap percolation.
	\begin{theorem}
	\label{thm:mainK5}  
$M_5(n) \geq \frac{nr_3(n)}{1200}$. 
In particular, $M_5(n)\geq n^{2-O(1/\sqrt{\log n})}$.
\end{theorem}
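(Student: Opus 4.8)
The plan is to start from the slow $K_r$-process construction underlying Theorem~\ref{thm:mainKr} and to pin down why it needs $r\ge 6$: there the infection is funnelled along a long track of gadgets, each gadget an almost-complete $K_r$ which, once completed, hands control to the next gadget, and the argument uses the two or three vertices of slack that a completed $K_r$ has over a $K_5$ in order to both anchor the current gadget and seed the next one without creating a shortcut copy of $K_r$ elsewhere. At $r=5$ this slack vanishes, so the idea is to supply it arithmetically. Fix $m=\lfloor n/C\rfloor$ for an absolute constant $C$ to be chosen, and let $B\subseteq[m]$ be a $3$-AP-free set of size $r_3(m)$. Lay the vertex set out in a bounded number of layers indexed by arithmetic progressions, in the style of the Behrend / Ruzsa--Szemer\'edi tripartite construction, so that there is one gadget for every pair $(x,b)$ with $x$ ranging over an interval of length $\Theta(m)$ and $b\in B$, the gadget at $(x,b)$ living on the Behrend-type triple $x,\ x+b,\ x+2b$ together with a constant number of auxiliary vertices; and define the initial infected edge set $G_0$ so that, in a fixed (say lexicographic) order on the pairs $(x,b)$, the last still-uninfected edge of each gadget can be completed into a copy of $K_5$ only after the last edge of the preceding gadget has been infected.

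The proof then has two halves. For completeness, one exhibits the explicit sequence $e_1,\dots,e_T$ in which the designated edges are meant to be infected, with $T\ge c\,m\,r_3(m)$, and checks for each $i$ that adding $e_i$ to $G_0\cup\{e_1,\dots,e_{i-1}\}$ completes a copy of $K_5$; since by design $e_i$ cannot be infected before $e_{i-1}$, the process cannot stabilise before time $T$, so $M_5(n)\ge T$. Soundness is the step I expect to be the main obstacle: one must control \emph{every} copy of $K_5$ that appears anywhere in the closure $\langle G_0\rangle$, not only the intended gadget copies, since stray infected edges could otherwise shortcut the schedule or trigger everything at once. Because adjacency across layers is governed by linear relations among the vertex labels, any five mutually adjacent vertices force a small linear system on their labels whose only solutions outside the intended family correspond to nontrivial three-term arithmetic progressions inside $B$, which do not exist by the choice of $B$. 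Making this rigorous — choosing the layers, the auxiliary vertices that upgrade the classical Behrend \emph{triangles} into genuine five-cliques, and the linear relations so that the only copies of $K_5$ are the gadget ones and so that no gadget completes ahead of schedule — is the technical heart; the delicate point is that the auxiliary vertices needed to turn the pieces into $K_5$'s must not themselves create extra cliques.

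Finally one counts. The number of gadgets, and hence the running time, is $\Theta\big(m\cdot r_3(m)\big)$. A standard pigeonhole bound gives $r_3(m)\ge \tfrac{m}{2n}\,r_3(n)$ (partition $[n]$ into consecutive blocks of length at most $m$ and keep a densest one), so with $m=\lfloor n/C\rfloor$ the running time is at least a constant multiple of $\tfrac{n\,r_3(n)}{C^2}$, which after fixing $C$ and absorbing lower-order losses is at least $\tfrac{n\,r_3(n)}{1200}$ for large $n$. Substituting Behrend's bound $r_3(n)\ge n^{1-O(1/\sqrt{\log n})}$ then gives $M_5(n)\ge n^{2-O(1/\sqrt{\log n})}$, as claimed.
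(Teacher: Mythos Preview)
Your plan is the right one and matches the paper's approach almost exactly: a tripartite Behrend/Ruzsa--Szemer\'edi layout with one gadget per pair $(i,b)$, $b$ ranging over a $3$-AP-free set, consecutive gadgets overlapping in a pair so that each hands control to the next, and the $3$-AP-free condition used to kill stray cliques. The counting paragraph is also essentially identical to the paper's.

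That said, what you have written is a proof \emph{plan}, not a proof, and you say so yourself (``Making this rigorous \dots\ is the technical heart''). The two places where the paper actually does nontrivial work, and where your sketch stops, are these.

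\textbf{The gadget.} You leave the ``constant number of auxiliary vertices'' unspecified. The paper's choice is concrete and clean: on layers $X,Y,Z$ the hyperedge attached to the pair $(i,b)$ is
\[
E_{i,b}=\{x_i,\;x_{i+1},\;y_{i+b},\;z_{i+2b},\;z_{i+2b+1}\},
\]
so the two ``auxiliary'' vertices are not new but are the path-neighbours $x_{i+1}$ and $z_{i+2b+1}$, shared with the next gadget in the same $b$-chain. This simultaneously ensures that consecutive gadgets overlap in exactly two vertices (the $f_i$'s) and that the total vertex count stays $\Theta(n)$. If one instead introduced fresh auxiliary vertices per gadget, either the vertex count would blow up or the overlap pattern needed to pass infection along would be lost. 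Pinning this down is not cosmetic; it is what makes the construction work.

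\textbf{Soundness.} You phrase it as ``control every copy of $K_5$ in the closure''. The paper's organising device is sharper and worth knowing: it proves the stronger, static statement that the $5$-uniform hypergraph $\mathcal H_B(n)=\{E_{i,b}\}$ is \emph{induced $K_5^-$-free}, i.e.\ every $K_5^-$ in its $2$-skeleton already sits inside some $E_{i,b}$. This is checked via a triangle lemma: any triangle in the $2$-skeleton has at least two ``transverse'' edges (between layers), each transverse edge has a well-defined length close to a unique $b\in B$, and a transverse triangle on $x_i,y_j,z_k$ forces $|2b_{xz}-b_{xy}-b_{yz}|\le 3$, hence equality since $B=10B'$ with $B'$ $3$-AP-free. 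From triangles one bootstraps to $K_5^-$'s. This is exactly the ``small linear system'' you allude to, but carried out. Because the individual $b$-chains do not share $f$-pairs, the paper then stitches them end-to-end with short disjoint ``turning'' chains to get a single long chain with the required overlap pattern; checking that the turning gadgets do not introduce new $K_5^-$'s is a short separate case analysis.

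In short: right idea, same architecture as the paper, but the two load-bearing pieces---the explicit $5$-edge $E_{i,b}$ and the induced-$K_5^-$-free verification via the triangle/length argument---are asserted rather than supplied.
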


The above theorem gives a significant improvement on the previously best lower bound $M_5(n) \geq n^{13/8-o(1)}$ from Theorem~\ref{thm:MaxRunTime}.
Conjecture~\ref{conj:main} is still open for $r=5$ and we tend to agree with the authors of \cite{MaxRunTime} about its validity. The positive resolution of Conjecture~\ref{conj:main} would closely tie the classic additive number theoretic function $r_3(n)$ to percolation. 
 
In the next section we start with some  basic terminology, and present simple number-theoretic facts that are used in the following sections.  Additionally, we prove a general lower bound for the maximum length of the $K_r$-bootstrap percolation process based on an $r$-uniform hypergraph with certain properties. In Section~\ref{sec:Kr} we prove Theorem~\ref{thm:mainKr}, and in Section~\ref{sec:K5} we prove Theorem~\ref{thm:mainK5}.

	\bigskip 
	
	\section{Preliminaries}
	Our graph-theoretic notation is standard, 
	in particular we use the following. For a graph $G = (V,E)$ and a set $U\subset V$, let $G[U]$ denote the corresponding vertex-induced subgraph of $G$. We also denote $e(G)=|E(G)|$. For a (hyper)graph $\mathcal H$ and $e\in E(\mathcal H)$, we denote by $\mathcal H-e$ the spanning (hyper)graph obtained from $\mathcal H$ after removing only the edge $e$. For $e\subseteq V(\mathcal H)$ where $e\notin E(\mathcal H)$ we denote by $\mathcal H\cup e$ the (hyper)graph obtained by adding the edge $e$ to the edges of  $\mathcal H$.
	For two hypergraphs $\mathcal H$ and $\mathcal H'$, we let $\mathcal H\cup \mathcal H'$ be the hypergraph with $V(\mathcal H\cup \mathcal H')=V(\mathcal H)\cup V(\mathcal H')$ and $E(\mathcal H\cup \mathcal H')=E(\mathcal H)\cup E(\mathcal H')$. 
The edges of an $r$-uniform hypergraphs are referred  as $r$-edges, sometimes including $r=2$. For a set of vertices U, we denote by $\binom{U}{2}$ the set of all pairs of vertices from $U$.	
Whenever the  reference to the vertex set is not crucial we tend to identify a (hyper)graph $\mathcal H$ with its (hyper)edge set and write $\mathcal H$ instead of $E(\mathcal H)$.

In the proofs of the main theorems, we need to construct (simple) graphs on which the running time of the  graph-bootstrap percolation will be long enough. In order to do so, we first construct auxiliary hypergraphs, and then  consider  the graphs obtained from them. Generally speaking, we want to find $r$-uniform hyergraphs for which each $r$-edge will represent a potential copy of $K_r$, and the intersection between the $r$-edges that has size 2 will represent exactly the 2-edges that we add during the percolation.    Denote  $K_r^-$  the complete graph on $r$ vertices with one edge deleted.
	
		\begin{definition}[2-skeleton]
		The {\em 2-skeleton of a hypergraph $\mathcal H$} is the graph $G = G({\mathcal H})$ with $V(G)=V(\mathcal H)$, and $E(G)=\{ab: \{a,b\}\subseteq e \text{ for some } e\in E(\mathcal H)\}$.
	\end{definition}

\begin{definition}[Induced $H$-free]\label{Definition_InducedKrFree}
	An $r$-uniform hypergraph $\mathcal H$ is {\em induced $H$-free} if every copy of $H$ in the $2$-skeleton of $\mathcal H$ is contained in an edge of $\mathcal H$.
\end{definition}

	\begin{lemma}[Key Lemma]\label{Lem:KeyLemma}
If there exists an $r$-uniform hypergraph $\mathcal H$ on $n$ vertices with an ordering $e_1, e_2, \dots, e_{m} \in \mathcal H$ of its edges, such that 
\begin{itemize}
\item[(i)] $\mathcal H$ is induced $K_r^-$-free,
\item[(ii)] there exist subsets $f_i  \subseteq e_i$ of size $|f_i|=2$ for $1 \leq i \leq m$, such that $f_i \subseteq e_j$ if and only if $i=m=j$ or $i <m$ and $j = i, i+1$,
\end{itemize}
then the $K_r$-process starting with the subgraph 
$G_0 : = G - \{f_i : i=1, \ldots, m \}$  of the $2$-skeleton $G$ of 
$\mathcal H$ has length at least $m$. 
In particular,
$M_r(n) \geq m  =e(\mathcal H)$.
\end{lemma}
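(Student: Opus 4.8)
The plan is to run the $K_r$-process on $G_0$ one step at a time and show that the pair $f_i$ becomes infected \emph{exactly} at time $i$; this gives $G_i \neq G_{i-1}$ for every $1\le i\le m$, hence running time at least $m$, and since $\mathcal H$ has $m$ edges this yields $M_r(n)\ge m=e(\mathcal H)$.

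First I would extract two structural consequences of hypothesis (i). Because a copy of $K_r$ contains a copy of $K_r^-$ on the same $r$ vertices, and by induced $K_r^-$-freeness any copy of $K_r^-$ in the $2$-skeleton $G$ sits inside some edge of $\mathcal H$ -- an $r$-set -- every copy of $K_r$ in $G$ is spanned precisely by an edge $e_j$ of $\mathcal H$. Next, the process never leaves $G$: arguing by induction on $t$, if $G_t\subseteq G$ and an edge $e\notin G_t$ gets infected at step $t+1$, then the new $K_r$ it completes, say on a vertex set $S$, has all its other $\binom{r}{2}-1$ pairs in $G_t\subseteq G$, so $G$ contains a copy of $K_r^-$ on $S$; by (i), $S$ is an edge of $\mathcal H$, so $G[S]=K_r$ and in particular $e\in E(G)$. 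Hence $G_t\subseteq G$ for all $t$, every copy of $K_r$ appearing during the process is spanned by some $e_j$, and the only pairs that can ever be infected are among $f_1,\dots,f_m$ (the remaining pairs of $G$ already lie in $G_0$, and no pair outside $G$ is ever added).

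Then I would read off hypothesis (ii): for each $j$, the pairs $f_i$ with $f_i\subseteq e_j$ are exactly $\{f_1\}$ if $j=1$ and $\{f_{j-1},f_j\}$ if $2\le j\le m$. So in $G_0$ the potential copy of $K_r$ on $e_j$ is missing precisely this set of pairs (in particular $G_0$ is $K_r$-free), and I claim by induction that $G_t=G_0\cup\{f_1,\dots,f_t\}$ for $0\le t\le m$. For the inductive step, a pair is infected at step $t+1$ only if it is the unique missing pair of some $e_j$ in $G_t$; scanning the description above, the copy on $e_j$ has no missing pair when $j\le t$, two missing pairs when $j\ge t+2$, and exactly one -- namely $f_{t+1}$ -- when $j=t+1$. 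Hence step $t+1$ infects precisely $f_{t+1}$, so $G_{t+1}=G_t\cup\{f_{t+1}\}$, closing the induction; consequently $G_i\neq G_{i-1}$ for all $1\le i\le m$ and the running time is at least $m$.

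The one point requiring care is using the biconditional in (ii) in both directions. The ``$\Rightarrow$'' direction -- no $f_i$ lies in any $e_j$ other than $e_i$ and $e_{i+1}$ -- is what prevents a potential copy of $K_r$ from acquiring its last missing edge ahead of schedule, so the process cannot jump forward; the ``$\Leftarrow$'' direction -- both $f_{j-1}$ and $f_j$ genuinely are pairs of $e_j$ -- is what guarantees that the copy on $e_{t+1}$ is completed, and its final pair $f_{t+1}$ infected, exactly at step $t+1$. Together with the inductive check that the process stays inside $G$, I expect this bookkeeping to be the main thing to verify carefully; everything else is immediate.
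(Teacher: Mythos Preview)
Your proof is correct and follows essentially the same approach as the paper: both argue by induction that $G_t = G - \{f_{t+1},\ldots,f_m\}$ (equivalently $G_t = G_0 \cup \{f_1,\ldots,f_t\}$), using condition~(i) to confine every $K_r^-$ appearing during the process to some hyperedge $e_j$, and condition~(ii) to count missing pairs inside each $e_j$. The only cosmetic difference is that you isolate ``the process never leaves $G$'' as a separate preliminary induction, whereas the paper gets this for free from the stronger inductive hypothesis $G_{i-1}=G-\{f_i,\ldots,f_m\}\subseteq G$.
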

\begin{proof} 
We prove by induction that for every $1 \leq i\leq m$, before the $i$th step of the $K_r$-process, $G_{i-1} = G -  \{ f_i, \ldots , f_m\}$.
From this the lemma follows immediately: $e(G_i) \setminus e(G_{i-1}) = \{ f_i\} \neq \emptyset$  for every $i =1, \ldots , m$, since by (ii) $f_i=f_j$ if and only if $i=j$. So  the $K_r$-process stabilizes only after at least $m$ steps, indeed. 

To start the induction, $G_0 = G -\{ f_1, \ldots , f_m\}$ is true by definition. 
Let us assume now that $G_{i-1} = G- \{f_i, \ldots , f_m\}$ for some $i>1$. 
By condition (i) and Definition~\ref{Definition_InducedKrFree},  every $K_r^-$ in $G$, and hence also in any of its subgraphs, like $G_{i-1}$, is contained in some $e_j$, $j=1, \ldots , m$. So in the $i$th step of the $K_r$-process a new $K_r$ can be created only from   these $K_r^-$'s. Since $G[e_j] \simeq K_r$ for every $j$ and
$G_{i-1} = G- \{f_i, \ldots , f_m\}$, condition (ii) implies that  $G_{i-1}[e_j]$ is
\begin{itemize}
\item a $K_r$ for every $j < i$, 
\item a $K_r$ minus the edge $f_i$ for $j=i$, and 
\item a $K_r$ minus the two edges $f_{j-1}$ and $f_j$ for $j>i$.
\end{itemize}
Consequently, in the $i$-th step of the $K_r$-process exactly  one new $K_r$ is created: the one the edge $f_i$ completes on the set $e_i$. 
This shows that $G_i = G_{i-1} \cup f_i = G- \{ f_{i+1}, \ldots , f_m\}$. 
\end{proof}

	The following technical lemma will be useful in the proof of Theorem~\ref{thm:mainKr}. 
	\begin{lemma}\label{NumberTheoryLemma}
		Let $n\geq 10$ 
and $\ell=n+20$ be integers, and let $d,s_1, s_2$ be integers such that 
$|d| \leq n^2/100$ and $|s_1|, |s_2|\leq 2$. 
If $d \equiv s_1 \ (\modd\ n) $ and $d\equiv s_2\  (\modd\ \ell)$ 
then $d =s_1=s_2$. 
	\end{lemma}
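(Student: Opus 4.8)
The plan is to convert the two congruences into integer equations and then exploit the fact that $\ell = n+20$ is almost a multiple of $n$. Write $d - s_1 = an$ and $d - s_2 = b\ell$ for integers $a,b$. Using $|d|\le n^2/100$ and $|s_1|,|s_2|\le 2$ we immediately get the crude bounds $|an| = |d-s_1| \le n^2/100 + 2$ and $|b\ell| = |d-s_2| \le n^2/100 + 2$, hence $|a| \le n/100 + 2/n$ and, more importantly, $|b| \le (n^2/100 + 2)/(n+20)$.

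Next I would subtract the two equations: since $\ell = n+20$, the difference $s_2 - s_1 = an - b\ell = (a-b)n - 20b$, so that $(a-b)\,n = (s_2 - s_1) + 20b$. Now bound the right-hand side: $|(a-b)\,n| \le |s_2-s_1| + 20|b| \le 4 + 20(n^2/100+2)/(n+20)$. A short computation reduces the inequality $4 + 20(n^2/100+2)/(n+20) < n$ to $n^2 + 20n - 150 > 0$, which holds for every $n \ge 10$ (the value at $n=10$ being $150$, and the quadratic being increasing thereafter). Therefore $|(a-b)\,n| < n$, which forces $a - b = 0$.

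With $a = b$ in hand, the identity $(a-b)n = (s_2-s_1)+20b$ collapses to $20b = s_1 - s_2$; since $|s_1 - s_2| \le 4 < 20$ this is only possible if $b = 0$, and hence $a = b = 0$ as well. Substituting back, $d - s_1 = an = 0$ and $d - s_2 = b\ell = 0$, so $d = s_1 = s_2$, which is exactly the claim.

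I expect the only delicate point to be the inequality bookkeeping in the middle paragraph: one must keep $20|b|$ comfortably below $n$, which is why the constant $1/100$ in the bound on $|d|$ and the shift $20$ in the definition of $\ell$ were chosen as they were, and one should explicitly verify the boundary case $n = 10$. Everything else is a routine one-line divisibility argument, so no genuine obstacle is anticipated.
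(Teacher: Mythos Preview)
Your proof is correct and follows essentially the same route as the paper's: write $d=s_1+an=s_2+b\ell$, subtract to obtain $(a-b)n=(s_2-s_1)+20b$, use the size bound on $|d|$ to force $a=b$, and then use $|s_1-s_2|<20$ to force $b=0$. Your inequality bookkeeping is in fact a bit more carefully spelled out than in the paper (you reduce to $n^2+20n-150>0$ and check $n=10$), but the underlying argument is identical.
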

	
	\begin{proof}
		There exist integers $k_1, k_2$ such that $d=k_1n+s_1$ and 
$d=k_2\ell +s_2=k_2n+20k_2+s_2$. 
 Subtracting these from each other gives $(k_1-k_2)n=20k_2+ s_1-s_2$.

If $k_1\neq k_2$, then the absolute value of the left hand side 
is at least $n$.
For the absolute value of the right hand side we have 
$|20k_2 +s_1-s_2|= \left| 20\frac{d-s_2}{n+20} +s_1 - s_2\right| \leq 
20\frac{|d|}{n} + |s_1|+ |s_2| \leq  n/5 + 4$, a contradiction to the lower bound on $n$. 

If $k_1=k_2\neq 0$, then we have $20 \leq |-20k_2| = |s_1-s_2| \leq 4$ which is a contradiction. Hence,  $k_1=k_2 = 0$, in which case $0=s_1-s_2$ and $d=s_1=s_2$.
	\end{proof}

	\bigskip
	
	\section{Proof of Theorem~\ref{thm:mainKr} ($K_r$-Bootstrap Percolation for $r\geq 6$).}\label{sec:Kr}

We start by observing that the theorem can easily be reduced to the case of $r=6$.

\begin{proposition} \label{prop:reduction} $M_r(n) \leq M_{r+1}(n+1)$.
\end{proposition}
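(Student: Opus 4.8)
The plan is to show that any starting graph $G_0 \subseteq K_n$ whose $K_r$-process has running time $t$ can be "lifted" to a starting graph on $n+1$ vertices whose $K_{r+1}$-process runs for at least $t$ steps. The natural construction is to add one new vertex $v$ to $[n]$, joined to every vertex of $[n]$, and take $G_0' := G_0 \cup \{vx : x \in [n]\}$ as the starting graph in $K_{n+1}$. Intuitively, the presence of the universal vertex $v$ makes a copy of $K_{r+1}$ on a set $S \cup \{v\}$ (with $|S| = r$) appear exactly when a copy of $K_r$ appears on $S$ in the original process, so the new process should simulate the old one edge-for-edge.

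First I would set up notation: write $G_t$ for the graphs in the $K_r$-process on $[n]$ starting from $G_0$, and $G_t'$ for the graphs in the $K_{r+1}$-process on $[n+1]$ starting from $G_0'$. The key claim to prove by induction on $t$ is that $G_t' = G_t \cup \{vx : x \in [n]\}$ for all $t \geq 0$; equivalently, that for each $t$ the edges of $G_t'$ inside $[n]$ are exactly $E(G_t)$ and all $n$ edges at $v$ are present from the start. The base case $t=0$ holds by definition of $G_0'$. For the inductive step I would argue in both directions. If an edge $e = xy \subseteq [n]$ gets infected in step $t+1$ of the $K_r$-process, it completes a $K_r$ on some $r$-set $S \ni x,y$ inside $G_t$; then $S \cup \{v\}$ spans a $K_{r+1}$ in $G_t' \cup e$ (using the inductive hypothesis that all of $E(G_t)$ and all $v$-edges are in $G_t'$), and this is a new copy since $e$ was missing, so $e$ is infected in step $t+1$ of the $K_{r+1}$-process. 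Conversely, if $e = xy$ is a non-$v$ edge infected in step $t+1$ of the $K_{r+1}$-process, it completes some $K_{r+1}$ on an $(r+1)$-set $T$; since $v$ is joined to everything and at most... well, $v \in T$ or not. If $v \in T$, then $T \setminus \{v\}$ is an $r$-set spanning $K_r$ in $G_t' \cup e$ restricted to $[n]$, which by the inductive hypothesis is $G_t \cup e$, giving a new $K_r$ and infecting $e$ in the $K_r$-process. If $v \notin T$, then $T$ already spans a $K_r$-worth... actually $T$ is an $(r+1)$-set inside $[n]$ with $T \setminus e$ spanning $K_{r+1}^-$ in $G_t$; but then replacing any vertex of $T$ not in $e$ by $v$ also yields a $K_{r+1}$ on $T' \cup \{v\}$, forcing $e$ to complete a $K_r$ on the corresponding $r$-subset of $[n]$ — so again $e$ is infected in the $K_r$-process at step $t+1$. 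Finally, no edge at $v$ is ever infected (they are all present already), so the non-$v$ edges of $G_{t+1}'$ are exactly $E(G_{t+1})$, completing the induction.

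From the claim, the $K_{r+1}$-process from $G_0'$ does not stabilize before step $t$ whenever the $K_r$-process from $G_0$ does not, since $G_{t}' \supsetneq G_{t-1}'$ exactly when $G_t \supsetneq G_{t-1}$. Taking $G_0$ to be an extremal starting graph achieving $M_r(n)$ then gives $M_{r+1}(n+1) \geq M_r(n)$, as desired.

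The main obstacle I anticipate is the converse direction of the inductive step, specifically handling a newly infected edge whose completing $K_{r+1}$ does \emph{not} use the vertex $v$: one must be careful to genuinely produce a \emph{new} copy of $K_r$ (not merely a copy) in the $K_r$-process, and to verify that the edge being replaced is not in $e$ itself. A clean way to handle this is to note that any $(r+1)$-clique in $[n]$ minus one edge, once that edge is added, contains many $r$-cliques through the new edge, and at step $t+1$ at least one of them must be new (else the $(r+1)$-clique would not be new either) — alternatively, one can sidestep the case entirely by observing that if $e$ completes a $K_{r+1}$ avoiding $v$ then it completes a $K_r$ avoiding $v$ on an $r$-subset, and newness transfers because $e$ was absent from $G_t$ by the inductive hypothesis.
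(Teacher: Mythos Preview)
Your proof is correct and follows the same construction as the paper: cone over $G_0$ with a universal vertex $v$ and show that the $K_{r+1}$-process on $G_0\vee v$ adds exactly the same edges at each step as the $K_r$-process on $G_0$. The paper bypasses your $v\notin T$ case analysis by observing directly that any edge completing a $K_{r+1}$ in $\Gamma\vee v$ also completes one through $v$ (just drop a vertex of $T$ outside $e$ and add $v$), so your final ``sidestep'' remark is in fact the cleanest route.
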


\begin{proof}
For a graph $\Gamma $ and a vertex $v\notin V(\Gamma)$, denote by $\Gamma\vee v$ the graph obtained by adding $v$ to $\Gamma$ and all the edges $\{vu \mid u\in V(\Gamma)\}$. 
Observe that for any $\Gamma \subseteq K_n$ the set
$$\{ e \in E(K_n) \setminus E(\Gamma) : \mbox{$\exists L\subseteq V(K_n)$ with $\Gamma[L\cup e] \simeq K_r^-$} \}$$ 
of edges added to $\Gamma$ in the first step of the $K_r$-process is the same as the set  $$\{ e \in E(K_n\vee v) \setminus E(\Gamma \vee v) : \mbox{$\exists L\subseteq V(K_n)$ with $\Gamma[L\cup \{ v\} \cup e] \simeq K_{r+1}^-$} \}  $$   
of edges added to $\Gamma \vee v$ in the $K_{r+1}$-process. Then
the proposition follows immediately since then for any starting graph  $G_0 \subseteq K_n$ the $K_r$-process adds edges in the exact same order as does the $K_{r+1}$-process with starting graph $G_0\vee v \subseteq K_n \vee v$ and hence it also lasts exactly as long.
\end{proof}

In the rest of this section we show that Theorem~\ref{thm:mainKr} holds for $r=6$.
\begin{lemma} \label{lemma:r=6}
$M_6(n) \geq \frac{n^2}{2000}.$
\end{lemma}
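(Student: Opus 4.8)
The plan is to feed a suitable auxiliary hypergraph into the Key Lemma (Lemma~\ref{Lem:KeyLemma}). Concretely, it suffices to produce, for every large $n$, a $6$-uniform hypergraph $\mathcal H$ on at most $n$ vertices with $e(\mathcal H)\ge n^2/2000$ which is induced $K_6^-$-free and admits an ordering $e_1,\dots,e_m$ of its edges together with pairs $f_i\subseteq e_i$ obeying condition~(ii); the bound $M_6(n)\ge e(\mathcal H)$ is then immediate (adding isolated vertices is harmless, since $M_6$ is monotone in the number of vertices).

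\emph{Construction.} Let $n_1$ be the largest multiple of $20$ with $n_1\le (n-20)/2$, set $n_2:=n_1+20$, and take $V(\mathcal H)=\mathbb Z_{n_1}\sqcup\mathbb Z_{n_2}$, so $|V(\mathcal H)|\le n$. With $m:=\lceil n^2/2000\rceil$, for $0\le k\le m-1$ define the $6$-set
$$
H_k\ :=\ \{\,2k,\,2k+1,\,2k+2\,\}\subseteq\mathbb Z_{n_1}\ \ \cup\ \ \{\,2k,\,2k+1,\,2k+2\,\}\subseteq\mathbb Z_{n_2},
$$
the offsets taken modulo $n_1$ in the first block and modulo $n_2$ in the second. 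Since $m$ is well below $\tfrac12\operatorname{lcm}(n_1,n_2)$, the $H_k$ are distinct $6$-element sets; let $\mathcal H=\{H_0,\dots,H_{m-1}\}$ with the order $e_{k+1}:=H_k$. A direct check gives $H_k\cap H_{k+1}=\{\,2k+2\bmod n_1,\ 2k+2\bmod n_2\,\}$, a pair with one vertex in each class; put $f_{k+1}:=H_k\cap H_{k+1}$ for $0\le k\le m-2$ and $f_m:=\{\,2m\bmod n_1,\ 2m\bmod n_2\,\}$. Then $e(\mathcal H)=m\ge n^2/2000$.

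\emph{Condition (ii).} It suffices to show that $f_{k+1}\subseteq H_j$ forces $j\in\{k,k+1\}$ (for $k=m-1$ only $j=k$ is available, matching the ``$i=m=j$'' clause). Reading $f_{k+1}\subseteq H_j$ coordinatewise gives $2(k-j)\equiv s_1\pmod{n_1}$ and $2(k-j)\equiv s_2\pmod{n_2}$ for some $s_1,s_2\in\{-2,-1,0\}$. As $|2(k-j)|\le 2m\le n_1^2/100$ for large $n$, Lemma~\ref{NumberTheoryLemma} (with $n\leftarrow n_1$, $\ell\leftarrow n_2=n_1+20$) forces $2(k-j)=s_1=s_2$, and since the left-hand side is even we get $j\in\{k,k+1\}$.

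\emph{Induced $K_6^-$-freeness: the main obstacle.} Let $G$ be the $2$-skeleton of $\mathcal H$ and suppose $W\subseteq V(G)$, $|W|=6$, spans at least $14$ of its $15$ pairs; the goal is $W\subseteq H_k$ for some $k$. Inside either class, $G$ is a subgraph of the cycle-power $\operatorname{Cay}(\mathbb Z_{n_i},\{\pm1,\pm2\})$, whose only triangles are the consecutive triples starting at an even residue --- precisely the class-$i$ parts of the hyperedges. A short degree/counting argument (supplemented, in the $4$-$2$ split, by a computation of the same flavour as below, and by the fact that a $4$-vertex ``$K_4$ minus an edge'' can never lie inside a $3$-vertex hyperedge-part) shows that if $W$ meets one class in at least $4$ vertices it cannot reach $14$ edges. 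So $W$ splits $3$-$3$: it is the union of the class-$1$ part of some $H_k$ and the class-$2$ part of some $H_{k'}$, and at least $8$ of the $9$ cross pairs are edges. Each cross edge present forces a hyperedge index $j\in\{0,\dots,m-1\}$ with $2(k-j)\equiv s\pmod{n_1}$ and $2(k'-j)\equiv s'\pmod{n_2}$, $|s|,|s'|$ bounded by a small constant; comparing the congruences obtained from two cross edges sharing a vertex --- so that a difference of two of these indices becomes small both modulo $n_1$ and modulo $n_2$ --- and invoking Lemma~\ref{NumberTheoryLemma} again (here the closeness of the moduli, and the divisibility $20\mid n_1$, are exactly what rules out the otherwise dangerous near-coincidences $k\approx k'$) forces $k=k'$, hence $W\subseteq H_k$. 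With the hypotheses of Lemma~\ref{Lem:KeyLemma} verified, it follows that $M_6(n)\ge m\ge n^2/2000$. The whole difficulty lies in this last step: one must exclude \emph{every} way of assembling $14$ edges on $6$ vertices out of distinct hyperedges, and the two nearby moduli together with Lemma~\ref{NumberTheoryLemma} are designed precisely for that.
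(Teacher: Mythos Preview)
Your two–class construction (three consecutive residues in each of $\mathbb Z_{n_1}$ and $\mathbb Z_{n_2}=\mathbb Z_{n_1+20}$) is a genuine variant of the paper's four–class hypergraph $\mathcal H(n)$, where each hyperedge meets $X,Y,Z,W$ in a $2{+}1{+}2{+}1$ pattern. The paper's extra ``pendant'' classes $Y$ and $W$ are not decorative: since each $y_j$ lies in a \emph{unique} triangle of $G[X\cup Y]$, any clique on the $(X\cup Y)$–side is instantly contained in a single $e_t$ (Claim~\ref{clique2}), which lets the paper first localise every $K_5$ to one hyperedge (Claim~\ref{cl:Kr-1InEdge}) and then dispose of $K_6^-$ by a short intersection bound (Claim~\ref{cor:InducedKrfree}). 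Your symmetric $3{+}3$ design can be pushed through with the same CRT mechanism, but it trades those structural shortcuts for a longer case analysis.

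Two places in your sketch are not yet proofs. First, in the $3$--$3$ split you assert that each triple equals the class--part of some $H_k$, i.e.\ is a triangle $\{2k,2k+1,2k+2\}$. That is not automatic: one triple may carry the single missing edge and be merely a path in the within--class graph, for instance $\{2k-1,2k,2k+1\}$, $\{2k,2k+2,2k+3\}$, or $\{2k,2k+2,2k+4\}$, with all nine cross pairs present. Each such path has to be excluded separately (look at the hyperedge indices witnessing the cross edges from the odd vertex $2k'+1$ on the other side and compare them). Second, several of these comparisons --- and already the case where the missing cross edge is $(2k+1,2k'+1)$ --- force residues with $|s_i|$ equal to $4$ (or even $6$), outside the hypothesis $|s_i|\le 2$ of Lemma~\ref{NumberTheoryLemma} as stated. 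The proof of that lemma extends to such values since $|s_1-s_2|<20$ is all that is used, but you must say so; as written you are invoking the lemma beyond its range. With these two points filled in your construction does yield induced $K_6^-$--freeness, and the rest of your argument (condition~(ii), the edge count) is fine.
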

From this lemma and the above proposition our theorem follows easily.
\begin{proof}[Proof of Theorem~\ref{thm:mainKr}]
By repeated application of Proposition~\ref{prop:reduction} and then of Lemma~\ref{lemma:r=6}
we have that for every fixed $r\geq 6$ and every sufficiently large  $n$,
$$M_r(n)\geq M_6(n-r+6)\geq \frac 1{2000}(n-r+6)^2\geq \frac 1{2500}n^2.$$
\end{proof}

In order to prove \ref{lemma:r=6} we define a $6$-uniform 
hypergraph ${\mathcal H}(n)$ on $\Theta (n)$ vertices with $\Theta (n^2)$ edges which satisfies the conditions  of Lemma~\ref{Lem:KeyLemma}.

	\begin{definition}[$\mathcal H(n)$]\label{Def:KrHypergraph}
		Let  $n\ge 10,$ and  $\ell=n+20$ be positive integers. The $6$-uniform hypergraph ${\mathcal H}= {\mathcal H}(n)$ is defined on the vertex set 
$$V(\mathcal H)= X\cup Z \cup Y \cup W$$
where $X=\{x_0,x_1,\dots,x_{n-1}\}$, $Z=\{z_0,z_1,\dots,z_{\ell-1}\}$, $Y=\{y_0,y_1,\dots,y_{n-1}\}$,  $W=\{w_0,w_1,\dots,w_{\ell-1}\}$, 
are four pairwise disjoint sets. 
Let $\mathcal H=\{ e_t \mid  0\leq t\leq  m -1 \},$ where $m =\lfloor {n^2}/{100}\rfloor$ and for every $0\leq t \leq m-1$,  we denote
$$e_t=\{x_{t (\modd\ n)},\  x_{t+1 (\modd\ n)},\ y_{t+1 (\modd\ n)},  \  z_{t (\modd\ \ell)},\  z_{t+1 (\modd\ \ell)},\ w_{t+1 (\modd\ \ell)}\}.$$ 
	\end{definition}

First, we count the vertices and edges of $\mathcal H(n)$ and define the appropriate pairs $f_i \subseteq e_i$ for the use of the Key Lemma.

\begin{proposition}\label{prop:simple-properties} For $0\leq t \leq m-1$, define $f_{t} := \{ x_{t+1 (\modd\ n)}, z_{t+1 (\modd\ \ell)}\}$. 
\begin{itemize}  
	\item[(a)] $\mathcal H(n)$ has  $4n +40$ vertices.
\item[(b)] If $0\leq t\neq j\leq m-1$,  then $e_t\neq e_{j}$. In particular, 
$\mathcal H(n)$ has $m$ different edges.
\item[(c)] $f_{t} \subseteq e_j$ if and only if $t <m-1$ and $j = t, t+1$ or $t=m-1=j$.
\end{itemize}
\end{proposition}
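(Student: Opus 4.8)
The plan is to establish the three parts essentially separately, reducing (b) and (c) to the number-theoretic Lemma~\ref{NumberTheoryLemma}. Part (a) is a direct count: the classes $X,Z,Y,W$ are pairwise disjoint with $|X|=|Y|=n$ and $|Z|=|W|=\ell=n+20$, so $|V(\mathcal H)|=2n+2\ell=4n+40$. (It is worth recording along the way that each $e_t$ genuinely has six distinct vertices, since $x_{t (\modd\ n)}\neq x_{t+1 (\modd\ n)}$ and $z_{t (\modd\ \ell)}\neq z_{t+1 (\modd\ \ell)}$ as $n,\ell\geq 2$, and the four blocks lie in disjoint classes; thus $\mathcal H(n)$ is honestly $6$-uniform.)

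For part (b) I would argue by contradiction. Assume $e_t=e_j$ with $0\le t,j\le m-1$. Since $e_t\cap Y=\{y_{t+1 (\modd\ n)}\}$, the equality forces $t+1\equiv j+1\pmod n$, i.e. $t\equiv j\pmod n$; comparing the unique $W$-vertices of the two edges gives $t\equiv j\pmod\ell$. Setting $d:=t-j$, we have $|d|\le m-1\le n^2/100$, $d\equiv 0\pmod n$ and $d\equiv 0\pmod\ell$, so Lemma~\ref{NumberTheoryLemma} with $s_1=s_2=0$ yields $d=0$, i.e. $t=j$. Hence $e_0,\dots,e_{m-1}$ are pairwise distinct, which also settles the ``in particular'' clause.

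For part (c), the ``if'' direction is immediate from the definitions: $x_{t+1 (\modd\ n)},z_{t+1 (\modd\ \ell)}\in e_t$ for every $0\le t\le m-1$, so $f_t\subseteq e_t$; and if $t<m-1$ then $e_{t+1}$ also contains both $x_{t+1 (\modd\ n)}$ and $z_{t+1 (\modd\ \ell)}$ (as its ``lower'' $X$- and $Z$-vertices), so $f_t\subseteq e_{t+1}$. For the ``only if'' direction, suppose $f_t\subseteq e_j$. The two $X$-vertices of $e_j$ are $x_{j (\modd\ n)}$ and $x_{j+1 (\modd\ n)}$, so $x_{t+1 (\modd\ n)}\in e_j$ forces $t\equiv j-1$ or $t\equiv j\pmod n$; similarly, from the two $Z$-vertices of $e_j$, $z_{t+1 (\modd\ \ell)}\in e_j$ forces $t\equiv j-1$ or $t\equiv j\pmod\ell$. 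Thus $d:=t-j$ satisfies $d\equiv s_1\pmod n$ and $d\equiv s_2\pmod\ell$ for some $s_1,s_2\in\{-1,0\}$, and $|d|\le m-1\le n^2/100$, so Lemma~\ref{NumberTheoryLemma} gives $d=s_1=s_2\in\{-1,0\}$, i.e. $j=t$ or $j=t+1$. Finally, since $j\le m-1$, the case $j=t+1$ can occur only when $t\le m-2$, i.e. $t<m-1$, while for $t=m-1$ only $j=t=m-1$ survives, which is exactly the stated characterization.

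I do not expect a genuine obstacle: once the congruences are written down, everything is routine. The only points that need a little care are the boundary bookkeeping in (c) around $t=m-1$, and the observation that the residues that actually arise, namely $0$ and $-1$, have absolute value at most $2$, so that Lemma~\ref{NumberTheoryLemma} is applicable (together with $|d|\le m-1\le n^2/100$).
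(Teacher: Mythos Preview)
Your proposal is correct and follows essentially the same approach as the paper: a direct count for (a), and for (b) and (c) you extract the relevant congruences modulo $n$ and $\ell$ from the $X$-, $Y$-, $Z$-, $W$-components and feed them into Lemma~\ref{NumberTheoryLemma}. The only cosmetic differences are that you work with $d=t-j$ (giving $s_1,s_2\in\{-1,0\}$) where the paper uses $j-t$ (giving $s_1,s_2\in\{0,1\}$), and you add the harmless extra observations about $6$-uniformity and the boundary case $t=m-1$.
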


\begin{proof} Part (a) follows by adding up the sizes of participating pairwise disjoint sets.
For part (b) note that if $e_t=e_j$ then $ t \equiv j\ (\modd\ n)$ and $t\equiv j\ (\modd\ \ell)$ so by Lemma~\ref{NumberTheoryLemma} we have $t=j$. 
The ``if'' direction of part (c) can be read off from the definitions of $f_t$ and $e_j$. For the other direction suppose that $f_{t}=\{x_{t+1 (\modd\ n)},z_{t +1(\modd\ \ell)}\}\subseteq e_j$. Then it follows that $x_{t+1 (\modd\ n)}=x_{j (\modd\ n)}$ or $x_{j+1 (\modd\ n)}$, and $z_{t+1 (\modd\ \ell)}=z_{j (\modd\ \ell)}$ or $z_{j+1 (\modd\ \ell)}$, which means that $j-t \equiv 0$ or $1\ (\modd\ n)$, and $j-t  \equiv 0$ or $1\ (\modd\ \ell)$. By Lemma~\ref{NumberTheoryLemma} then we have $j-t = 0$ or $1$. 
	\end{proof}
	
In part (c) of  Proposition~\ref{prop:simple-properties} we have verified condition (ii) of Lemma~\ref{Lem:KeyLemma} for $\mathcal H(n)$. In the rest of this section we verify condition (i), that is, we show that $\mathcal H(n)$ is induced $K_6^-$-free. 
Let $G = G(n)$ denote the $2$-skeleton of $\mathcal H(n)$. 

\begin{claim}[Cliques on a side of  the 2-skeleton]\label{clique2}
Let $U\subseteq X\cup Y$ (or $U\subseteq Z \cup W$) be a set of vertices such that $G[U]$ is a clique. 
Then $|U|\leq 3$ and $U\subseteq e_t$ for some $0\leq t \leq n-1$ (or some $0\leq t \leq \ell -1$).
\end{claim}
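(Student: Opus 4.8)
The plan is to read off the structure of the graphs $G[X\cup Y]$ and $G[Z\cup W]$ straight from Definition~\ref{Def:KrHypergraph}, and then bound cliques by hand. Throughout, the indices of the $x$'s and $y$'s are read modulo $n$ and those of the $z$'s and $w$'s modulo $\ell$. Since $e_t\cap(X\cup Y)=\{x_t,x_{t+1},y_{t+1}\}$ for every $t$, the only pairs inside $X\cup Y$ that become edges of $G$ are those of the form $\{x_i,x_{i+1}\}$, $\{x_{j-1},y_j\}$ or $\{x_j,y_j\}$. From this I would record two consequences: (1) $Y$ is an independent set of $G$, because $e_t\cap Y$ is a single vertex for every $t$; and (2) every neighbour of $y_j$ that lies in $X$ is one of $x_{j-1},x_j$.

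Now let $U\subseteq X\cup Y$ be such that $G[U]$ is a clique. By (1) we have $|U\cap Y|\le 1$. If $U\subseteq X$, then $U$ contains no triangle: three distinct pairwise-adjacent vertices $x_a,x_b,x_c$ would force $b,c\in\{a-1,a+1\}$, hence $\{b,c\}=\{a-1,a+1\}$ and $b-c\equiv\pm2\pmod n$, contradicting the fact that $x_b\sim x_c$ requires $b-c\equiv\pm1\pmod n$ — here we use $n\ge 4$. Thus $|U|\le 2$, and since any edge of $G$ inside $X$ can be written $\{x_a,x_{a+1}\}$ with $0\le a\le n-1$, we get $U\subseteq e_a$ (a singleton $\{x_a\}$ likewise lies in $e_a$). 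If instead $U\cap Y=\{y_j\}$, then by (2) every vertex of $U\cap X$ lies in $\{x_{j-1},x_j\}$, so $U\subseteq\{x_{j-1},x_j,y_j\}=e_{j-1}\cap(X\cup Y)$ — a set of size at most $3$ contained in the edge $e_{j-1}$, and $j-1$ can be taken in $\{0,\dots,n-1\}$. In all cases $|U|\le 3$ and $U\subseteq e_t$ for some $0\le t\le n-1$.

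The case $U\subseteq Z\cup W$ is the identical argument with $(n,X,Y)$ replaced by $(\ell,Z,W)$: one has $e_t\cap(Z\cup W)=\{z_t,z_{t+1},w_{t+1}\}$, $W$ is independent in $G$, every neighbour of $w_j$ in $Z$ is $z_{j-1}$ or $z_j$, and $\ell=n+20\ge 4$, so the same dichotomy gives $|U|\le 3$ and $U\subseteq e_t$ for some $0\le t\le\ell-1$.

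I do not expect a genuine obstacle; the whole claim is an unwinding of the definition of $\mathcal H(n)$. The only points needing a little care are: the step ruling out a triangle inside $X$ (and inside $Z$) genuinely uses $n\ge 4$ (resp.\ $\ell\ge 4$) — for $n=3$ the set $\{x_0,x_1,x_2\}$ would be a clique contained in no single $e_t$; and naming the witnessing edge with index in $\{0,\dots,n-1\}$ (resp.\ $\{0,\dots,\ell-1\}$) presupposes that $e_0,\dots,e_{\ell-1}$ are all present, which holds in the regime of interest since $m=\lfloor n^2/100\rfloor\ge\ell$ for large $n$. Both are immediate and cause no trouble.
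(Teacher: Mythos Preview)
Your proof is correct and follows essentially the same approach as the paper's: both read off that $G[X\cup Y]$ is the union of the triangles $\{x_t,x_{t+1},y_{t+1}\}$, then split according to whether $U$ meets $Y$, using that $G[X]$ is a cycle of length $n\ge 4$ (which you verify directly) to rule out triangles inside $X$. Your extra remarks about needing $m\ge\ell$ and $n\ge 4$ are careful observations the paper leaves implicit, since it ultimately only applies the construction for $n$ large.
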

\begin{proof} 
Let us assume that $U\subseteq X\cup Y$, the proof of the case when $U\subseteq Z\cup W$ is analogous. 
By the definition of the edges of $\mathcal H(n)$ the restriction of the $2$-skeleton $G$ to $X\cup Y$ is the union of $n$ edge disjoint  triangles, one for each $t=0, \ldots , n-1$ on the vertex set $\{x_t, x_{t+1 (\modd\ n)}, y_{t+1 (\modd\ n)}\} $. 
In particular,  if $y_{t+1 (\modd\ n)} \in U$ 
 then $U \subseteq e_{t}$. Otherwise $U\cap (X\cup Y)\subseteq X$ and has size at most two, since $G[X]$ is a cycle of length $n \geq 4$. We then  conclude that $U= \{x_t, x_{t+1 (\modd\ n)}\}\subset e_t$, for some $t = 0, \ldots , n-1$.
\end{proof}

\begin{claim}[Copies of $K_{5}$ in $G$]\label{cl:Kr-1InEdge}
If $G[U] \simeq K_{5}$ then there exists a $t$ such that 
$U \subset e_t$. 
\end{claim}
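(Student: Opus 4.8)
The goal is to show that any $K_5$ in the $2$-skeleton $G$ of $\mathcal H(n)$ is contained in a single edge $e_t$. The natural strategy is to analyze how a clique $U$ with $|U|=5$ can distribute itself among the four vertex classes $X, Z, Y, W$. The key structural fact, already isolated in Claim~\ref{clique2}, is that inside each of the two ``halves'' $X\cup Y$ and $Z\cup W$ the skeleton is a disjoint union of triangles (indexed by residues mod $n$ and mod $\ell$ respectively), so $U$ meets each half in at most $3$ vertices, and if it meets a half in exactly the triangle-completing $Y$- or $W$-vertex it is forced into a single $e_t$. Since $|U|=5$, $U$ must meet both halves; I would split into the cases $(|U\cap(X\cup Y)|, |U\cap(Z\cup W)|) \in \{(2,3),(3,2)\}$ (the $(1,4)$ and $(4,1)$ splits being impossible by Claim~\ref{clique2}).

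\textbf{Main steps.} First I would invoke Claim~\ref{clique2} on each half to learn that $U\cap(X\cup Y)$ sits inside some triangle $T_a = \{x_a, x_{a+1}, y_{a+1}\}$ (indices mod $n$) and $U\cap(Z\cup W)$ sits inside some triangle $S_b = \{z_b, z_{b+1}, w_{b+1}\}$ (indices mod $\ell$). The crucial point is then to understand which cross-edges between a vertex of $T_a$ and a vertex of $S_b$ exist in $G$: an edge $\{x_p, z_q\}$ (or $\{x_p, w_q\}$, etc.) lies in $G$ only if both endpoints appear together in some $e_s$, which by the definition of $e_s$ forces $p \in \{s, s+1\} \pmod n$ and $q\in\{s,s+1\}\pmod\ell$ simultaneously, and moreover the specific roles (first vs.\ second coordinate, $x$ vs.\ $y$, etc.) to be compatible with the template of $e_s$. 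Because $U$ has at least two vertices on each side, I would extract from the existence of all the required cross-edges a system of congruences of the form $b - a \equiv \varepsilon_1 \pmod n$ and $b-a\equiv \varepsilon_2\pmod \ell$ with $\varepsilon_1, \varepsilon_2$ small (in $\{-1,0,1\}$ or so). Lemma~\ref{NumberTheoryLemma} (applied to $d = b-a$, whose absolute value is controlled since both indices live in ranges of size $O(n)$) then pins down $b-a$ to a single small value, which in turn identifies a common index $t$ with $s \equiv t$ in both moduli, forcing $U\subseteq e_t$. One then checks that the handful of surviving configurations each genuinely sit inside one $e_t$; in the borderline cases where $U$ contains neither $y$- nor $w$-vertex (so $U\subseteq X\cup Z$ with $2+3$ or $3+2$ split among cycle-vertices), the same congruence bookkeeping still forces everything into one $e_t$, or else shows $|U|\le 4$, contradicting $|U|=5$.

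\textbf{Expected obstacle.} The bookkeeping of \emph{which} cross-edges exist is the delicate part: an edge like $\{x_s, z_s\}$ and $\{x_{s+1}, z_{s+1}\}$ both occur in $e_s$, but $\{x_s, z_{s+1}\}$ does not (there is no $e_s$ containing both), so the ``$\varepsilon_1=\varepsilon_2$'' coupling between the two halves is what does the work --- and getting all the sign/role cases right without missing one is where care is needed. The $(3,2)$ vs.\ $(2,3)$ asymmetry (one half contributing a $y$- or $w$-vertex, the other not) also has to be handled, but in each subcase the argument reduces to: collect the congruences the cross-edges impose, feed $d=b-a$ into Lemma~\ref{NumberTheoryLemma} using $|d| \le n^2/100$ (in fact $|d| < \ell < n^2/100$ for large $n$), conclude $d\in\{-1,0,1\}$, and read off the common $e_t$. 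I would organize the proof so that the congruence extraction is stated once and then specialized, to avoid repeating four nearly identical computations.
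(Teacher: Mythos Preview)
Your overall strategy matches the paper's: apply Claim~\ref{clique2} to each half to get $U\cap(X\cup Y)\subseteq\{x_q,x_{q+1},y_{q+1}\}$ and $U\cap(Z\cup W)\subseteq\{z_s,z_{s+1},w_{s+1}\}$, then use cross-edges together with Lemma~\ref{NumberTheoryLemma} to produce a common $t$. However, your cross-edge bookkeeping contains a concrete error: the pair $\{x_s, z_{s+1}\}$ \emph{is} contained in $e_s$ (indeed all four $X$--$Z$ pairs inside $e_s$ are edges of $G$), so there is no ``$\varepsilon_1=\varepsilon_2$ coupling'' to be extracted from $xz$-edges. A single edge $\{x_p,z_q\}\in G$ only yields some index $s$ with $p\in\{s,s+1\}\pmod n$ and $q\in\{s,s+1\}\pmod\ell$, the two offsets being independent; this does not produce congruences on $d=b-a$ of the form you describe, and feeding $b-a$ into Lemma~\ref{NumberTheoryLemma} is not how the argument runs.

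The paper's route around this is to exploit the $y$- and $w$-vertices, each of which lies in a hyperedge with \emph{no} slack: $y_{q+1}\in e_t$ forces $t\equiv q\pmod n$ exactly, and $w_{s+1}\in e_t$ forces $t\equiv s\pmod\ell$ exactly. Since $|U|=5$ forces at least one of $y_{q+1},w_{s+1}$ into $U$ (otherwise $U\subseteq X\cup Z$ and the two cycles give $|U|\le 4$, as you note), the paper branches on whether both are present (then the single edge $y_{q+1}w_{s+1}$ hands you a common $t$ directly) or just one is, say $w_{s+1}$ (then $U_1=\{x_q,x_{q+1}\}$, and the edges from $w_{s+1}$ to $x_q$ and to $x_{q+1}$ give hyperedge indices $t,t'$ with $t'-t\equiv 0\pmod\ell$ and $t'-t\in\{0,1,2\}\pmod n$; Lemma~\ref{NumberTheoryLemma} applied to $d=t'-t$ then gives $t=t'$). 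So the quantity fed into Lemma~\ref{NumberTheoryLemma} is a difference of hyperedge indices in $[0,m)$, not the difference of the triangle-labels $a,b$ you propose.
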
	

\begin{proof}
	Since $U$ spans a clique in $G$,  the graphs induced by both $U_1=U\cap(X\cup Y)$ and $U_2=U\cap(Z\cup W)$ are also cliques.
By Claim~\ref{clique2}, $U_1 \subseteq e_q $ for some $0\leq q \leq n-1$, and $U_2 \subseteq e_s$ for some $0\leq s \leq \ell-1$, hence 
$$ U_1 \cup U_2 \subseteq \{ x_q,\ x_{q+1 (\modd\ n)},\ y_{q+1 (\modd\ n)},\  z_s,\  z_{s+1 (\modd\ \ell)},\ w_{s+1 (\modd\ \ell)} \}.$$
We show that $e_t = \{ x_q, x_{q+1 (\modd\ n)},y_{q+1 (\modd\ n)}, z_s, z_{s+1 (\modd\ \ell)},w_{s+1 (\modd\ \ell)} \}$ for some  $e_t \in {\mathcal H}(n)$, which happens if there is an integer $t\le m$, such that $q \equiv t (\modd\ n)$ and $s\equiv t (\modd\ \ell)$.
This is certainly the case if both $y_{q+1 (\modd\ n)}$ and $w_{s+1 (\modd\ \ell)}$ are in $U$, since then they are adjacent in the $2$-skeleton $G$ and hence there exists a $t$ such that $e_t$ contains both, implying the required congruences. 
Otherwise, exactly one of $y_{q+1 (\modd\ n)}$ and $w_{s+1 (\modd\ \ell)}$ is in $U$, say $w_{s+1 (\modd\ \ell)} \in U_2$ and is adjacent to both $x_q$ and $x_{q+1 (\modd\ n)}$ (which form $U_1$). Then there exist  $t$ and $t'$ such that $\{w_{s+1 (\modd\ \ell)},x_q\}\subseteq e_t$ and $\{w_{s+1 (\modd\ \ell)},x_{q+1 (mod\ n)}\}\subseteq e_{t'}$. 
This implies $q\equiv t$ or $t+1$, and $q+1 \equiv t'$ or $t'+1$ $(\modd\ n)$. 
In any case  $1 = (q+1)-q \equiv t' -t + \{ 1, 0, -1\} \ (\modd \ n)$, so $t'-t \equiv \{ 0,1,2\}\  (\modd\ n)$.  
Furthermore $s+1 \equiv t+1$ and $s+1 \equiv t'+1 \ (\modd\ \ell)$, so  $t'-t \equiv 0 \ (\modd \ \ell)$.
By Lemma~\ref{NumberTheoryLemma} we get $t'-t = 0$ and hence the required congruences $q \equiv t (\modd\ n)$ and $s\equiv t (\modd\ \ell)$ hold.
\end{proof}

	\begin{claim}[Induced $K_6^-$-freeness]\label{cor:InducedKrfree}
	The hypergraph	$\mathcal H_6(n)$ is induced $K_6^-$-free.
	\end{claim}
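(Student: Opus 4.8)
The plan is to show that any copy of $K_6^-$ in the $2$-skeleton $G$ is contained in an edge of $\mathcal H(n)$, which is exactly condition (i) needed to apply the Key Lemma. A copy of $K_6^-$ has $6$ vertices, and among them there is a vertex of degree $5$ in the copy; in particular, a copy of $K_6^-$ contains a copy of $K_5$ on the five vertices other than the one non-edge, and moreover it contains \emph{two} copies of $K_5$ (spanned by the two sets obtained by deleting one endpoint of the missing edge). So the first step is to invoke Claim~\ref{cl:Kr-1InEdge} on one such $K_5$: if $G[U]\simeq K_6^-$ with $U = \{u_1, \dots, u_6\}$ and missing edge $u_5u_6$, then $G[U\setminus\{u_6\}]\simeq K_5$ lies in some edge $e_t$, and $G[U\setminus\{u_5\}]\simeq K_5$ lies in some edge $e_{t'}$.

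Next I would argue that in fact all six vertices of $U$ lie in a single edge. Since each $e_t$ has exactly $6$ vertices and $U\setminus\{u_6\}\subseteq e_t$ while $U\setminus\{u_5\}\subseteq e_{t'}$, we have $|e_t \cap e_{t'}| \geq 4$ (they share $u_1,u_2,u_3,u_4$). The key structural fact to establish is that two distinct edges $e_t, e_{t'}$ of $\mathcal H(n)$ can intersect in at most a bounded number of vertices — and in particular fewer than $4$ — so that $e_t = e_{t'}$, which forces $u_5$ and $u_6$ both into this common edge and hence $U\subseteq e_t$. To see the intersection bound: if $e_t$ and $e_{t'}$ share a vertex in $X$ then $t \equiv t'$ or $t'\pm 1 \pmod n$; sharing a vertex in $Z$ gives $t\equiv t'$ or $t'\pm 1 \pmod\ell$; sharing a vertex in $Y$ forces $t+1\equiv t'+1\pmod n$, i.e. $t\equiv t'\pmod n$; and similarly for $W$ modulo $\ell$. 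If $|e_t\cap e_{t'}|\geq 4$, then by pigeonhole among the four sides $X,Z,Y,W$, the two edges share vertices on enough sides to pin down both $t\equiv t' \pmod n$ (up to a small shift) and $t\equiv t'\pmod\ell$ (up to a small shift); since $|t - t'| \leq m \leq n^2/100 < n^2/100$ in absolute difference, an application of Lemma~\ref{NumberTheoryLemma} (with $d = t'-t$ and small $s_1, s_2$) yields $t = t'$.

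More carefully, I would split on which side contains the fourth shared vertex, but the cleanest route is: $U\setminus\{u_5,u_6\} = \{u_1,u_2,u_3,u_4\}$ is a $4$-clique lying in both $e_t$ and $e_{t'}$; applying Claim~\ref{clique2} to $\{u_1,u_2,u_3,u_4\}\cap(X\cup Y)$ and to $\{u_1,u_2,u_3,u_4\}\cap(Z\cup W)$ shows each of these two parts has size $\leq 3$, so one of them has size $\geq 2$. If the $X\cup Y$ part has size $\geq 2$ and contains a $Y$-vertex $y_{q+1}$, that vertex lies in a unique edge, namely $e_q$, forcing $t\equiv q\equiv t' \pmod n$; if it has size $\geq 2$ with no $Y$-vertex it is $\{x_q, x_{q+1}\}$, again forcing $t, t' \in \{q-1, q\}\pmod n$ hmm this only gives a shift — but combined with whatever the $Z\cup W$ part contributes modulo $\ell$, Lemma~\ref{NumberTheoryLemma} closes it. The main obstacle is precisely this bookkeeping: making sure that the shared $4$-clique really does constrain \emph{both} residues (mod $n$ and mod $\ell$), not just one of them, possibly allowing $\pm1$ slack, and then checking that Lemma~\ref{NumberTheoryLemma} applies with the right parameters ($|d| = |t'-t| \leq m < n^2/100$, $|s_i|\leq 2$). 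Once $e_t = e_{t'}$ is established, we conclude $U\subseteq e_t$ and $\mathcal H(n)$ is induced $K_6^-$-free, completing the verification of the Key Lemma's hypotheses and hence (with Proposition~\ref{prop:simple-properties}) the proof of Lemma~\ref{lemma:r=6}.
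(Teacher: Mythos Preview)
Your approach is correct and essentially identical to the paper's: decompose the $K_6^-$ into its two constituent $K_5$'s, apply Claim~\ref{cl:Kr-1InEdge} to place each inside some hyperedge $e_t$, $e_{t'}$, deduce $|e_t\cap e_{t'}|\geq 4$, and then argue via Lemma~\ref{NumberTheoryLemma} that distinct hyperedges cannot share this many vertices. The paper executes the last step more crisply than your detour through Claim~\ref{clique2}: it simply observes that $|e_t\cap e_{t'}\cap(X\cup Y)|\in\{0,1,3\}$ according as $t-t'\equiv 0$, $\pm 1$, or otherwise $\pmod n$ (and symmetrically for $Z\cup W$ modulo $\ell$), so an intersection of size $4$ or $5$ forces one part to be $3$ and the other $1$, i.e.\ $t-t'\equiv 0$ on one modulus and $\pm 1$ on the other, which Lemma~\ref{NumberTheoryLemma} rules out since $0\neq\pm 1$---this sidesteps the bookkeeping you flagged as the main obstacle.
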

	
	\begin{proof}  Let $K'$ be a copy of $K_6^-$ in $G$. Since $K_6^-$ consists of two copies of $K_{5}$ intersecting in $4$ vertices and by Claim~\ref{cl:Kr-1InEdge} each of these copies is contained in a hyperedge of ${\mathcal H(n)}$, there exist $0\leq t, t' < m$ such that $V(K') \subseteq e_t\cup e_{t'}$ and $|e_t\cap e_{t'}|\geq 4$. 
We show now that if $t\neq t'$ then $|e_t\cap e_{t'}| \leq 3$, implying that $t=t'$ and hence that $V(K') \subseteq e_t$, as required.

Observe
that $|e_t \cap e_{t'} \cap (X\cup Y)|$ is equal to 
$3$ if $ t \equiv t' (\modd\ n)$, equal to $1$ if $t \equiv t'-1$ or $t'+1 (\modd\ n)$, and equal to $0$ otherwise. 
Analogously, $|e_t \cap e_{t'} \cap (Z\cup W)|$ is equal to 
$3$ if $ t \equiv t' (\modd\ \ell)$, equal to $1$ if $t \equiv t'-1$ or $t'+1 (\modd\ \ell)$, and equal to $0$ otherwise. 

Consequently $6 > |e_t\cap e_{t'}| = |e_t \cap e_{t'} \cap (X\cup Y)| + |e_t \cap e_{t'} \cap (Z\cup W)| \geq 4$ would require that  
$ t - t' \equiv 0 \ (\modd\ n)$ and $t - t' \equiv \{1, -1\} \ (\modd\ \ell)$ (or the same congruences with $n$ and $\ell$ switched). This is impossible (in either case) by Lemma~\ref{NumberTheoryLemma}, since $ 0 \not\in \{ 1, -1\}$.
	\end{proof}

\begin{proof}[Proof of Lemma~\ref{lemma:r=6}] 
Above we have checked that the hypergraph $\mathcal H(n)$ satisfies both conditions of Lemma~\ref{Lem:KeyLemma}, hence $M_6(|V({\mathcal H(n)})|) \geq |E({\mathcal H(n)})|$. Then by parts (a) and (b) of Proposition~\ref{prop:simple-properties} we have that for every 
$N$ sufficiently large,
$$M_6(N) \geq M_6(4n+40) = M_6(|V({\mathcal H(n)})|) \geq \left\lfloor \frac{n^2}{100} \right\rfloor \geq \frac{1}{2000}N^2,$$
where $n$ is the unique integer such that $4n+40 < N \leq 4n+43$.
\end{proof}

	\section{Proof of Theorem~\ref{thm:mainK5} ($K_5$-Bootstrap Percolation).}\label{sec:K5}
	
The construction we introduced for $r=6$  fails to extend for the case that $r=5$.
While for $r=6$, we were able to show that the only copies of $K_{r-1}$ are inside the edges of the hypergraph 
(and thus no extra copies of $K_r^-$ can appear), this is not necessarily the case for $r=5$.  One natural  construction is to keep the sets $X,Y,Z$ from Definition~\ref{Def:KrHypergraph}. This fails, as $x_i,x_{i+1}, y_i, z_i, z_{i+20}$ spans an induced $K_5^-$.
It seems that to avoid this, we need to put  $5$-edges on $x_i, x_{i+1}$ that intersects $Y$ in different vertices. Our intuition suggested that we should avoid triangles in the $2$-skeleton, coming from different $5$-edges. This led us to use the Behrend construction, which is useful constructing such graphs. 
 To guarantee this property for $r=5$, we will build the hypergraph from sets of integers that are $3$-AP-free.

First, given a subset $B$ of integers, we define an auxiliary $5$-uniform hypergraph $\mathcal H_B(n)$  on $\Theta(n)$ vertices with $\Theta (|B|n)$ edges, which is induced $K_5^-$-free for an appropriate choice of $B$. 
The hypergraph $\mathcal H_B(n)$ however will not satisfy  condition (ii) of our Key Lemma. By cutting $\mathcal H_B(n)$ into shorter pieces and connecting them through certain ``turning gadgets'', 
we will define a new $5$-uniform hypergraph $\mathcal H'_B(n)$ (also on $\Theta(n)$ vertices with $\Theta (|B|n)$ edges),  
which now possesses condition (ii) but also preserves the induced $K_5^-$-free property.
 	
We construct our hypergraphs from very simple building blocks.
\begin{definition}[Chain]\label{chai2} A chain $\mathcal C$ of length $m$ is a $5$-uniform hypergraph on an ordered set $\{ w_1, \ldots , w_{3m+2} \}$ of vertices, with edge set $$\mathcal C = \{ e_i =\{w_{3i-2}, w_{3i-1}, w_{3i}, w_{3i+1}, w_{3i+2} \} : i \in [m] \}.$$
\end{definition}
Chains satisfy condition (i) and a much stronger condition (ii) of the Key Lemma.
\begin{lemma}[Key Lemma for chains]\label{lemma:KeyChain} 
Let $\mathcal C = \{ e_1, \ldots , e_m \}$ be a chain of length $m$. Then
\begin{itemize} 
\item[(i)] $\mathcal C$ is induced $K_5^-$-free.
\item[(ii)] For every $i\in [m-1]$, $|e_i\cap e_{i+1}|=2$, and $|e_i\cap e_j|=0$ 
for every $j, |i-j|>1$.
\end{itemize} 
\end{lemma}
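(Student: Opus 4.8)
The plan is to verify the two claimed properties of a chain $\mathcal C = \{e_1, \ldots, e_m\}$ directly from its very rigid structure: the $2$-skeleton $G(\mathcal C)$ is essentially a "thickened path", where consecutive $5$-edges overlap in exactly two vertices and non-consecutive $5$-edges are disjoint. Part (ii) is the easy half. For $i \in [m-1]$, the definition gives $e_i = \{w_{3i-2}, w_{3i-1}, w_{3i}, w_{3i+1}, w_{3i+2}\}$ and $e_{i+1} = \{w_{3i+1}, w_{3i+2}, w_{3i+3}, w_{3i+4}, w_{3i+5}\}$, so $e_i \cap e_{i+1} = \{w_{3i+1}, w_{3i+2}\}$, which has size $2$. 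For $|i-j| > 1$, say $j \geq i+2$, the smallest index in $e_j$ is $3j-2 \geq 3i+4 > 3i+2$, which is the largest index in $e_i$; hence $e_i \cap e_j = \emptyset$. This disposes of (ii).

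For part (i), I would argue that every copy of $K_5^-$ in $G(\mathcal C)$ is contained in a single hyperedge $e_i$. First I would record the structure of $G(\mathcal C)$: since each $e_i$ spans a $K_5$ on $\{w_{3i-2}, \ldots, w_{3i+2}\}$ and consecutive cliques share the pair $\{w_{3i+1}, w_{3i+2}\}$ while non-consecutive ones are vertex-disjoint, $G(\mathcal C)$ is exactly the union of these $m$ copies of $K_5$ glued in a path-like fashion. The key observation is then about cliques: I claim any clique $U$ in $G(\mathcal C)$ is contained in a single $e_i$. Indeed, if $U$ contains a vertex $w_a$ with $a \not\equiv 1, 2 \pmod 3$ lying in the "interior" of exactly one clique $e_i$, then all neighbors of $w_a$ lie in $e_i$, forcing $U \subseteq e_i$. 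The only vertices lying in two cliques are the shared pairs $\{w_{3i+1}, w_{3i+2}\}$; but $w_{3i+1}$ and $w_{3i+2}$ are adjacent only to vertices of $e_i \cup e_{i+1}$, and within $e_i \cup e_{i+1}$ the only vertices adjacent to \emph{both} interior-type vertices of $e_i$ and of $e_{i+1}$ simultaneously are $w_{3i+1}, w_{3i+2}$ themselves — so a clique cannot use an interior vertex of $e_i$ together with an interior vertex of $e_{i+1}$. Hence any clique is confined to one $e_i$ and in particular has size at most $5$.

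Since $K_5^-$ is a graph on $5$ vertices containing a $K_4$ (in fact $K_5^-$ is itself "almost" a clique), its vertex set spans a clique of size $4$; by the previous paragraph that $K_4$ lies inside some $e_i$. It then remains to check that the fifth vertex of the $K_5^-$ also lies in $e_i$. The fifth vertex is adjacent to at least two (indeed three) vertices of that $K_4 \subseteq e_i$; since among those four vertices there is at least one interior-type vertex $w_a$ of $e_i$ (as $e_i$ has only two shared/boundary vertices and the $K_4$ uses four of the five vertices of $e_i$, it must contain an interior one — more carefully, $e_i$ contains at least three interior-type vertices $w_{3i-2}, w_{3i-1}, w_{3i}$, so the $K_4$ contains one of them), and any vertex adjacent to an interior-type vertex of $e_i$ must itself lie in $e_i$. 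Therefore all five vertices of the $K_5^-$ lie in $e_i$, which is an edge of $\mathcal C$, establishing that $\mathcal C$ is induced $K_5^-$-free.

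The main obstacle is the bookkeeping around the "shared" vertices $w_{3i+1}, w_{3i+2}$: one must be careful that a purported large clique does not sneak across the boundary between two consecutive cliques $e_i, e_{i+1}$. The clean way to handle this, which I would adopt, is to prove the self-contained auxiliary statement "every clique of $G(\mathcal C)$ is contained in some $e_i$" first (this is the analogue of Claim~\ref{clique2} in the $r = 6$ case), since once that is in hand both the size bound $|U| \le 5$ and the induced-freeness follow with essentially no further work; the $K_5^-$ has the convenient feature that it contains a spanning-minus-one clique, so locating its $K_4$ inside one $e_i$ and then pulling in the last vertex via adjacency to an interior vertex finishes the argument immediately.
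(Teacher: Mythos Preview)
Your strategy --- show every clique of $G(\mathcal C)$ lies in some $e_i$, then locate a $K_4$ of the $K_5^-$ inside some $e_i$ and pull in the fifth vertex --- is different from the paper's (which works directly with the linear order of the indices) and can be made to work, but as written it rests on a miscount. You claim that $e_i$ has ``only two shared/boundary vertices'' and ``at least three interior-type vertices $w_{3i-2}, w_{3i-1}, w_{3i}$''. This is false for $1 < i < m$: since $e_{i-1}\cap e_i=\{w_{3i-2},w_{3i-1}\}$ and $e_i\cap e_{i+1}=\{w_{3i+1},w_{3i+2}\}$, each such $e_i$ has \emph{four} boundary vertices and only the single interior vertex $w_{3i}$. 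Hence the $K_4$ you found inside $e_i$ may well be $\{w_{3i-2},w_{3i-1},w_{3i+1},w_{3i+2}\}$, which contains no interior vertex at all; and even when $w_{3i}$ does belong to the $K_4$, the fifth vertex is only required to be adjacent to three of the four and may avoid $w_{3i}$. So your deduction that the fifth vertex lies in $e_i$ is not justified. (The same miscount leaves a hole in your clique argument: you never treat the case of a clique consisting only of boundary vertices, such as the $K_4$ above.)

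The repair is short once the overlap pattern is corrected: the fifth vertex together with its three neighbours in the $K_4$ forms a second $K_4$, which by your clique claim lies in some $e_j$; this $e_j$ shares three vertices with $e_i$, and part~(ii) then forces $j=i$. The paper's argument is shorter still and bypasses the clique claim: writing the five vertices as $w_{j_1},\dots,w_{j_5}$ with $j_1<\dots<j_5$, if $w_{j_1}w_{j_5}\in G(\mathcal C)$ then both lie in a common $e_i$, so $j_5-j_1=4$ and the five consecutive indices are exactly $e_i$; otherwise $w_{j_1}w_{j_5}$ is a non-edge, and one checks that $w_{j_1}$ or $w_{j_5}$ is then missing a second edge into $V(K)$, contradicting $K\cong K_5^-$.
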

\begin{proof} Part (ii) is immediate from the definition. 
For (i) let $j_1 < j_2 < j_3 < j_4 < j_5$ be the indices of a copy $K$ 
of $K_5^-$ in the $2$-skeleton $G$ of ${\mathcal C}$. If $w_{j_1}w_{j_5}$ is an edge of $G$ then 
$j_5-j_1 = 4$, and $j_3$ is of the form $3i$, hence $K$ is spanned by the hyperedge $e_i$. 
Otherwise either $w_{j_1}$ or $w_{j_5}$ is missing two edges into $V(K)$. 
\end{proof}

\begin{definition}[$\mathcal H_b(n)$]\label{def:H_b(n)}
Let $n\in \mathbb{N}$ and $b\in [n]$. Let $X=\{x_0, \dots, x_{n}\}$, $Y=\{y_0, \dots, y_{n}\}$, and $Z=\{z_0, \dots, z_{n}\}$ be three pairwise disjoint sets.
The chain $\mathcal H_b(n)$ is defined on the vertex set $X\cup Y\cup Z$ with vertex order $x_0, z_{2b}, y_{b}, \ldots , x_i, z_{i+2b}, y_{i+b}, \ldots , y_{n-b}, x_{n-2b}, z_n$. 
\end{definition}

\paragraph{{\bf Remark.}} We have $$\mathcal H_b(n) = \{ E_{i,b}=\{x_{i}, x_{i+1}, y_{i+b}, z_{i+2b}, z_{i+2b+1}\} : \mbox{$0\leq i\leq n -2b-1$} \}$$ and for $i=0, \dots, n-2b-2$, we have $E_{i,b}\cap E_{i+1, b}=\{x_{i+1}, z_{i+2b+1}\}$ and for $j\geq i+2$, we have $E_{i,b}\cap E_{j, b}=\emptyset$.

Chains  have only  linearly many edges, we construct our first hypergraph by taking the union of several of them.

\begin{definition}[$\mathcal H_B(n)$]
For a subset $B\subseteq [n]$, we define $\mathcal H_B(n) = \cup_{b\in B} {\mathcal H_b(n)}$.
\end{definition}

Next we show that for an appropriately chosen $B$ the hypergraph  $\mathcal H_B(n)$ satisfies condition (i) of the Key Lemma (Lemma~\ref{Lem:KeyLemma}).

\begin{lemma}[$\mathcal H_B(n)$ is induced $K^-_5$-free]\label{Lemma_GB_is_K5_closed}
If $B=10B'$ for some $3$-AP-free set $B'$, then 
 $\mathcal H_B(n)$ is induced $K^-_5$-free.
\end{lemma}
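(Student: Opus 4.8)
The plan is to peel off two purely structural facts about $G := G(\mathcal{H}_B(n))$ and reduce the lemma to the single assertion: \emph{every $K_4$ of $G$ lies inside one hyperedge of $\mathcal{H}_B(n)$}. First I would collect the consequences of $B = 10B'$: since dilation preserves $3$-AP-freeness, $B$ is $3$-AP-free; every element of $B$ is even and $\equiv 0 \pmod{10}$, hence any two distinct elements of $B$ differ by more than $2$ (indeed by at least $10$). I would also record, once, the edge relation of $G$: $uv \in E(G)$ iff $u,v$ share a common $E_{i,b}$, which for each pair type ($xx$, $zz$, $xy$, $yz$, $xz$) unwinds to an explicit condition on the indices relative to $B$ — e.g.\ $x_az_c \in E(G)$ iff $2b_1 \in \{c-a-1,\,c-a,\,c-a+1\}$ for some $b_1 \in B$, which, as $B$ is even, pins down exactly which integer must lie in $2B$ according to the parity of $c-a$; similarly $y_pz_c \in E(G)$ iff $c-p \in B$ or $c-p-1 \in B$, while the triangle $x_ax_{a+1}y_p$ forces $p-a\in B$ (otherwise $p-a-1$ and $p-a+1$ would both lie in $B$, impossible).

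With this I would prove two statements. \textbf{(A)} Two distinct hyperedges of $\mathcal{H}_B(n)$ meet in at most $2$ vertices: for edges of the same chain this is Lemma~\ref{lemma:KeyChain}(ii); for $E_{i,b}$ and $E_{j,b'}$ with $b\neq b'$, a coordinate-wise comparison on $X$, $Y$, $Z$ shows that agreement in two coordinates would force $b=b'$, using only that $B$ has no pair at distance $\le 2$. \textbf{(B)} Every $K_4$ of $G$ lies in one hyperedge: since $Y$ is independent and $G[X]$, $G[Z]$ are subgraphs of paths (so triangle-free), a $K_4$ has at most one vertex in $Y$ and at most two in each of $X,Z$, hence its vertex set is $\{x_a,x_{a+1},z_c,z_{c+1}\}$, $\{x_a,x_{a+1},y_p,z_c\}$, or $\{x_a,y_p,z_c,z_{c+1}\}$. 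In the first case the four $xz$-edges, via the parity dichotomy and the no-close-pair property, force $c-a$ even with $(c-a)/2\in B$, so the $K_4$ sits in $E_{a,(c-a)/2}$. In the second case the triangle $x_ax_{a+1}y_p$ gives $b:=p-a\in B$, and then the edges $y_pz_c$, $x_az_c$, $x_{a+1}z_c$, after splitting on the parity of $c-a$, either contradict the structure of $B$ (a residue computation modulo $10$) or exhibit three elements $\beta,\, b,\, (\beta+b)/2$ of $B$; by $3$-AP-freeness $\beta=b$, forcing $c-p\in\{b,b+1\}$ and placing the $K_4$ in $E_{a,b}$. The third case is handled analogously.

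The lemma then follows quickly: if $K\cong K_5^-$ sits in $G$ on vertex set $U$ with missing edge $\{u,v\}$, then $U\setminus\{u\}$ and $U\setminus\{v\}$ are $K_4$'s of $G$, so by (B) they lie in hyperedges $E_1,E_2$; since $U\setminus\{u,v\}\subseteq E_1\cap E_2$ has size $3>2$, (A) gives $E_1=E_2$, and hence $U=(U\setminus\{u\})\cup(U\setminus\{v\})\subseteq E_1$, i.e.\ $K$ is contained in a hyperedge. The main obstacle is the two mixed cases of (B): the bookkeeping that simultaneously (i) rules out the ``cross-parity'' sub-configurations — which is precisely where the dilation factor $10$ (rather than merely $2$ or $3$) is used, via a residue argument showing a putative member of $B$ would be $\equiv 1$ or $6\pmod{10}$ — and (ii) repackages the surviving configurations as genuine arithmetic progressions in $B$ so that $3$-AP-freeness can finish them. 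A minor point is to check that the hyperedge one names (say $E_{a,b}$) really belongs to $\mathcal{H}_B(n)$, i.e.\ that $0\le a\le n-2b-1$; this is automatic because every edge of the $K_4$ is realised by an honest hyperedge, and tracing one such realisation identifies it with $E_{a,b}$.
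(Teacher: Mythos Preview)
Your proof is correct and follows a genuinely different architecture from the paper's. The paper introduces \emph{transverse edges} and a \emph{length} function, proves that every triangle of $G_B$ lies in a single chain $G_b$ (the $3$-AP-free property is invoked only for fully transverse triangles $x_i y_j z_k$, via the inequality $|2b_{xz}-b_{xy}-b_{yz}|\le 3$ divided through by $10$), then propagates this through the $K_5^-$ to localise all of its edges to one $G_b$, finishing with Lemma~\ref{lemma:KeyChain}(i). You instead go straight for the stronger localisation ``every $K_4$ lies in a single hyperedge'', and pair it with the intersection bound (A) to absorb the two $K_4$'s of a $K_5^-$ into one hyperedge. Your structural observations---$Y$ independent, $G_B[X]$ and $G_B[Z]$ paths, hence only three $K_4$-types---replace the paper's triangle/propagation step, at the price of a longer explicit case analysis. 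Both routes consume the $3$-AP-free hypothesis at essentially the same moment: your equation $2\beta'=\beta_1+b$ in the mixed cases is the arithmetic twin of the paper's $2b_{xz}=b_{xy}+b_{yz}$. One small remark: in your analysis the residue argument modulo $10$ can be streamlined---evenness of $B$ together with the ``no two elements within distance $2$'' property already pins down each candidate uniquely and forces the arithmetic progression; the finer $\pmod{10}$ information is not strictly needed. Your minor point about the named hyperedge $E_{a,b}$ actually existing is handled correctly: tracing the witness for a suitable edge (e.g.\ $x_{a+1}z_c$ in Case~2) indeed identifies it with $E_{a,b}$.
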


\begin{proof} We show the lemma through a couple of claims.
We denote by $G_b$ and $G_B$ the $2$-skeleton of $\mathcal H_b(n)$ and $\mathcal H_B(n)$, respectively.
We say that an edge is a {\bf transverse edge} if it has the form $x_iy_j$, $y_iz_j$, or $x_iz_j$ for some $i,j$. 
We define the {\bf length} of transverse edges by $\Length(x_iy_j)=|j-i|$, $\Length $ $(z_k  y_i)=|k-j|$, $\Length(x_iz_k)=|k-i|/2$.

\begin{claim}[Length of transverse edges]\label{claim:length} \ 
Let $b,b'\in B$, $b\neq b'$ be two distinct integers. If $e \in G_b$ and 
$e'\in G_{b'}$ are transverse edges then $|\Length(e) -\Length(e')| \geq 8$.\\ 
In particular
every transverse edge $e\in G_B$ has a unique $b_e \in B$ such that $e \in G_{b_e}$.
\end{claim}
\begin{proof} 
Observe that the length of each of the eight transverse edges 
contained in a hyperedge $E_{i,b}=\{x_{i}, x_{i+1}, y_{i+b}, z_{i+2b}, z_{i+2b+1}\}$ is between $b- 1$ and $b+1$. Therefore, since the distance between any two distinct elements of the set $B=10B'$ is at least $10$, we have 
$$|\Length(e)-\Length(e')| \geq |b-b'| - |\Length(e) -b| - |\Length(e') -b'| 
\geq  10 -1 -1 \geq 8.$$ 
Taking $e=e'$ shows the uniqueness of the $b\in B$ for which $G_b$ contains $e$.
\end{proof}

A triangle is called a {\bf  transverse triangle} if all its edges are transverse (or equivalently if its vertices are $x_i, y_j, z_k$ for some $i,j,k$).

\begin{claim}[Triangles in $G_B$]\label{claim:triangles}
 In $G_B$ every triangle $T$ has at least two transverse edges and is contained in $G_b$ for some $b$. 
\end{claim}
\begin{proof}
The first statement holds since $G_B[X]$ is a sub-path of the path $(x_0, x_1, \ldots , x_n)$, $G_B[Z]$ is a sub-path of the path $(z_0, z_1, \ldots , z_n)$, and $G_B[Y]$ is an independent set.

For the second statement suppose first that $x_i$, $y_j$, $z_k$ are the vertices of a transverse triangle $T$ in $G_B$. This means that 
there exist $b_{xy},  b_{yz}, b_{xz} \in B$, such that $x_iy_j \in G_{b_{xy}}$, 
$y_jz_k \in G_{b_{yz}}$, and $x_iz_k\in G_{b_{xz}}$. By the description of the edges 
in Definition~\ref{def:H_b(n)} we then have $|j-i-b_{xy}|\leq 1$, $|k-j-b_{yz}|\leq 1$, $|k-i-2b_{xz}|\leq 1$. 
Using the triangle inequality we have  $|2b_{xz}-b_{xy}-b_{yz}|\leq |j-i-b_{xy}|+|k-j-b_{yz}|+|2b_{xz}+i-k|\leq 3$. Dividing this inequality by $10$, we obtain that for the elements $b'_{xy}:=\frac{b_{xy}}{10}, b'_{yz}:=\frac{b_{yz}}{10}, b'_{xz}:=\frac{b_{xz}}{10}$ of the set $B'$, it holds that
$|2b'_{xz}-b'_{xy}-b'_{yz}|\leq \frac{3}{10}$. Since $2b'_{xz}-b'_{xy}-b'_{yz}$ is an integer and $B'$ is $3$-AP-free, we have $b'_{xz} = b'_{xy} = b'_{yz}$. Hence
$b_{xz}=b_{xy}=b_{yz} = :b$ and $T\subseteq G_b$.

Assume now that $T$ is  a non-transverse triangle. By the first line of the proof, $T$ has either 
two vertices from $X$ or two vertices from $Z$. These vertices are adjacent in $G_B$, so have their indices $1$ apart: they are either $x_i$ and $x_{i+1}$ or $z_i$ and $z_{i+1}$ for some $i$. In any case, for the length of the transverse edges $e$ and $e'$ from these two vertices to the third vertex of $T$, we have $|\Length(e) - \Length(e')| \leq 1$. 
By Claim~\ref{claim:length}, $e$ and $e'$ are both contained in $G_b$ for some $b$, which also implies that the non-transverse of edge of $T$ 
is also contained in the same $G_b$.
\end{proof}

We are now ready to complete the proof of Lemma~\ref{Lemma_GB_is_K5_closed}.
Let $K$ be a copy of $K_5^-$ in $G_B$. Let $v_1, v_2 \in V(K)$ be the vertices of the missing edge of $K$ and let $U = V(K) \setminus \{ v_1, v_2\}$ be the remaining set of vertices spanning a triangle $T$ in $K$. 
By Claim~\ref{claim:triangles}, we have $T \subseteq G_b$ for some $b\in B$ and there are at least two transverse edges in $T$. Every other transverse edge $\tilde{e}$ of $K$ is adjacent to some transverse edge $e$ of $T$ and is contained  
in a triangle $\tilde{T}$ of $K \subseteq G_B$ together with $e$.  Applying Claim~\ref{claim:triangles}  to $\tilde{T}$, we have $b_e = b_{\tilde{e}}$. 

 Therefore
there exists $b\in B$ such that $b_e=b$ for every transverse edge $e$ of $K$. Then also all the non-transverse edges are in $G_b$, since they each participate in triangles of $K\subseteq G_B$. By Claim~\ref{claim:triangles}, these triangles must also contain transverse edges and hence also be contained in the same $G_b$. Hence the entire $K$ is in $G_b$, which by Lemma~\ref{lemma:KeyChain}(i) means that it is contained in an edge of $\mathcal H_b(n) \subseteq \mathcal H_B(n)$.
\end{proof}

Our construction of $\mathcal H_B(n)$ has condition (i) of the Key Lemma, but it lacks condition (ii), in particular there is no clear order on its edges. Generally, we plan to patch the chains of 
$\mathcal H_B(n)$ one after the other, but in order to be able to do so and still preserve condition (i), we might have to prune the chains a bit and connect their ends via short and disjoint ``turning gadgets''. We describe the construction of our final hypergraph $\mathcal H'_B(n)$ in two steps.

\begin{definition}[$\mathcal H'_b(n)$]\label{Def:H'b}
	Let $B = \{b_1,\dots, b_t\} \subseteq [n]$ be a set with its elements in order and let $f_{b_i}^j=\{x_{j}, z_{j+2b_i}\}$. 
	Define $\mathcal H'_{b_1}(n)=\mathcal H_{b_1}(n)$ and set $s_1=0$, $\ell_1=n-2b_1$. 
For every $2\leq j\leq t$, let $s_j$ and $\ell_j\leq n-2b_j$ be chosen such that $\ell_j-s_j$ is maximal with respect to $\left( f^{s_j}_{b_j} \cup f^{\ell_j}_{b_j} \right) 
\cap \left(\bigcup_{i=1}^{j-1} \left( f^{s_i}_{b_i}\cup f^{\ell_i}_{b_i}\right)  \right)=\emptyset$, and define $ \mathcal H'_{b_j}(n)=\{E_{i,b_j} \mid s_j\leq i\leq \ell_j-1 \}$. 
\end{definition}

\begin{definition}[$\mathcal H'_B(n)$]\label{Def:H'}
	Let $B=\{b_1,\dots,b_t \}$ and $\mathcal H'_{b_j}(n)$ be as in Definition~\ref{Def:H'b} and let $U_1, \ldots , U_{t-1}$ be pairwise disjoint new sets with $7$ vertices each.  For every $i\in [t-1]$, let $\mathcal D_i$ be a chain of  length $3$ on vertex set $f_{b_i}^{\ell_i} \cup U_i \cup f_{b_{i+1}}^{s_{i+1}}$, 
starting on $f_{b_i}^{\ell_i}$ and ending on $f_{b_{i+1}}^{s_{i+1}}$.
We define $\mathcal H'_B(n)=(\bigcup_{j=1}^t \mathcal H'_{b_j}(n))\cup (\bigcup_{i=1}^{t-1} \mathcal D_i)$. 
\end{definition}

We start by counting the vertices and edges of $\mathcal H'_B(n)$, and verifying
condition (ii) of the Key Lemma for it. 

\begin{proposition}\label{prop:simple-propertiesK5}
If $B\subseteq [n/4]$ is of the form $B=10B'$ where $B'$ is $3$-AP-free, 
then the following hold. 
	\begin{itemize} 
		\item[(a)] $\mathcal H'_B(n)$ has at most $3n+7|B|\leq 10n$ vertices.
		\item[(b)]
		$\mathcal H'_B(n)$ has $m\geq n|B|/2-8|B|^2$ edges.
\item[(c)] There is an ordering $E_1, E_1, \ldots , E_m$ of the edges of $\mathcal H_B'(n)$ such that there exist subsets $f_i  \subseteq E_i$ of size $|f_i|=2$ for $1 \leq t \leq m$, such that $f_i \subseteq E_j$ if and only if $i=m=j$ or $i <m$ and $j = i, i+1$.
	\end{itemize} 
\end{proposition}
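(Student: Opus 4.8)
The plan is to prove the three parts of Proposition~\ref{prop:simple-propertiesK5} essentially by bookkeeping, with the only genuinely nontrivial content being the lower bound on the number of edges in part (b) (controlling how much the chains $\mathcal H'_{b_j}(n)$ are pruned) and the verification of condition (ii) in part (c), which must correctly splice together the chain orderings through the turning gadgets.

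\textbf{Part (a).} Here I would simply add up the vertex sets used. The underlying sets $X, Y, Z$ each have $n+1$ vertices, so $|X\cup Y\cup Z| \le 3n+3$; note that all the $\mathcal H'_{b_j}(n)\subseteq \mathcal H_{b_j}(n)$ live inside $X\cup Y\cup Z$, so they contribute no new vertices. Each turning gadget $\mathcal D_i$ introduces the new $7$-element set $U_i$, and the endpoints $f_{b_i}^{\ell_i}, f_{b_{i+1}}^{s_{i+1}}$ of $\mathcal D_i$ already lie in $X\cup Z$. Since there are $t-1=|B|-1$ gadgets, the total is at most $3n+3 + 7(|B|-1) \le 3n + 7|B|$, and using $|B|\le |B'|\le n/40 \le n$ (since $B\subseteq[n/4]$ and $B=10B'$) this is at most $10n$. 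I'd be slightly careful about whether the $3n+3$ or $3n$ bound is claimed and adjust constants, but this is routine.

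\textbf{Part (b).} This is the main obstacle. The full untrimmed chain $\mathcal H_{b_j}(n)$ has $n-2b_j \ge n - 2(n/4) = n/2$ edges, so $\sum_j |\mathcal H_{b_j}(n)| \ge n|B|/2$. I need to show that the trimming to $\mathcal H'_{b_j}(n) = \{E_{i,b_j}: s_j\le i\le \ell_j-1\}$ loses at most $O(|B|^2)$ edges in total. The point is that when we choose $s_j$ and $\ell_j$ to maximize $\ell_j - s_j$ subject to $f^{s_j}_{b_j}$ and $f^{\ell_j}_{b_j}$ avoiding the $\le 2(j-1)$ previously used endpoint pairs $\bigcup_{i<j}(f_{b_i}^{s_i}\cup f_{b_i}^{\ell_i})$, we are really forbidding at most $2(j-1)$ values of the first coordinate near each end — actually I should check that $f^a_{b_j}=\{x_a, z_{a+2b_j}\}$ being disjoint from a fixed pair $\{x_c, z_{c+2b_i}\}$ fails for at most two values of $a$ (namely $a=c$ forced by the $x$-coordinate, and $a = c + 2b_i - 2b_j$ forced by the $z$-coordinate). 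So each of $s_j$ and $\ell_j$ is pushed in from $0$ and from $n-2b_j$ respectively by at most $2\cdot 2(j-1) = 4(j-1)$, hence $\ell_j - s_j \ge (n-2b_j) - 8(j-1) \ge n/2 - 8(j-1)$. Summing over $j\in[t]$, the total number of edges is at least $\sum_{j=1}^t (n/2 - 8(j-1)) + (\text{gadget edges}) \ge n|B|/2 - 8\binom{|B|}{2} \ge n|B|/2 - 8|B|^2$ (the chain gadgets $\mathcal D_i$ each contribute $3$ more edges, which only helps). I'd present the count of forbidden $a$-values carefully since that is the crux, but there is no deep idea here.

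\textbf{Part (c).} I'd build the ordering by concatenation. Within each $\mathcal H'_{b_j}(n)$, the edges $E_{s_j,b_j}, E_{s_j+1,b_j}, \dots, E_{\ell_j-1,b_j}$ form a chain in the sense of Definition~\ref{chai2} (by the Remark after Definition~\ref{def:H_b(n)}: consecutive ones meet in the pair $\{x_{i+1},z_{i+2b_j+1}\}$ and non-consecutive ones are disjoint), with the associated size-$2$ subsets being $f_i := \{x_{i+1}, z_{i+2b_j+1}\}$ for each internal overlap; likewise each gadget $\mathcal D_i$ is a length-$3$ chain. The global order is: chain $\mathcal H'_{b_1}$, then gadget $\mathcal D_1$, then chain $\mathcal H'_{b_2}$, then $\mathcal D_2$, and so on, ending with $\mathcal H'_{b_t}$. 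By construction $\mathcal D_i$ starts on $f_{b_i}^{\ell_i}$ — which is exactly the size-$2$ ``free'' end-pair of the last edge $E_{\ell_i-1, b_i}$ of $\mathcal H'_{b_i}$ — and ends on $f_{b_{i+1}}^{s_{i+1}}$, the free start-pair of the first edge $E_{s_{i+1}, b_{i+1}}$ of $\mathcal H'_{b_{i+1}}$, so the chains and gadgets link up head-to-tail into one long chain-like structure. The required property of (c) then follows from Lemma~\ref{lemma:KeyChain}(ii) applied segment by segment, provided I check the one global disjointness fact: that a size-$2$ pair $f_i$ arising as an overlap inside one segment is not accidentally contained in an edge of a different segment. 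For overlaps internal to a $\mathcal H'_{b_j}$-chain this is Lemma~\ref{lemma:KeyChain}(ii) within that chain; for the ``junction'' pairs $f_{b_i}^{\ell_i}$ and $f_{b_{i+1}}^{s_{i+1}}$, the maximality condition in Definition~\ref{Def:H'b} guarantees these endpoint pairs are all distinct from one another, and I need to observe additionally that $f_{b_i}^{\ell_i}=\{x_{\ell_i}, z_{\ell_i+2b_i}\}$ is not contained in any other hyperedge $E_{i', b_{j'}}$ (by the analysis of which $(i',b_{j'})$ can contain a given $\{x_a,z_c\}$, as in the proof of Proposition~\ref{prop:simple-properties}(c) and the uniqueness in Claim~\ref{claim:length}) nor in any gadget other than $\mathcal D_i$ and $\mathcal D_{i-1}$ (since the $U_i$ are pairwise disjoint and disjoint from $X\cup Y\cup Z$). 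Assembling these, the global ordering satisfies: $f_i\subseteq E_j$ iff $j\in\{i,i+1\}$ for $i<m$, and the very last edge $E_m$ gets $f_m$ as a designated size-$2$ subset contained only in $E_m$, matching the statement. The main thing to get right is this cross-segment disjointness of the designated pairs; once that is in hand, (c) is immediate.
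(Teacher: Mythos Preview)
Your proposal is correct and follows essentially the same approach as the paper's proof. Part (a) is identical; in part (b) you count forbidden values of the parameter $a$ for $f^a_{b_j}$ (at most two per previously used endpoint pair), whereas the paper phrases the same count as ``$4(j-1)$ forbidden vertices, each in at most two edges of $\mathcal H_{b_j}(n)$'', arriving at the same $8(j-1)$ loss per chain; and in part (c) both you and the paper concatenate the chains and gadgets in the order $\mathcal H'_{b_1},\mathcal D_1,\mathcal H'_{b_2},\ldots$ and invoke Claim~\ref{claim:length} for the cross-chain uniqueness of the pairs $\{x_a,z_{a+2b}\}$ together with the disjointness of the $U_i$'s.
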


\begin{proof} Part (a) follows by adding up the sizes of participating pairwise disjoint sets. For part (b), first note that $|\mathcal H_{b_j}(n)| = n-2b_j$. To construct $\mathcal H_{b_j}'(n)$ according to Definition~\ref{Def:H'b} we might need to delete from the beginning and the end of the chain $\mathcal H_{b_j}(n)$ the hyperedges incident to $\bigcup_{i=1}^{j-1} (f^{s_i}_{b_i} \cup f^{\ell_i+1}_{b_i})$. Each of these $4(j-1)$ vertices  participates in at most two edges of $\mathcal H_{b_j}(n)$ and hence $|\mathcal H_{b_j}'(n)| \geq n-2b_j - 8(j-1) \geq \frac{n}{2} - 8|B|$, where we also used $B\subseteq [n/4]$. The promised lower bound on  
$|\mathcal H_B'(n)| \geq \sum_{b\in B} |\mathcal H_b'(n)|$  follows.

For part (c) we create the ordering of $\mathcal H'_B(n)$ by patching together the natural ordering of the participating chains in the following order:
$\mathcal H'_{b_1}(n), \mathcal D_1,{\mathcal H'_{b_2}(n)},\mathcal D_2,\dots, \mathcal D_{t-1},{\mathcal H'_{b_t}(n)}$. The order of the edges within each $\mathcal H'_{b_i}(n)$ starts with $E^{s_i}_{b_i}$ and ends at $E^{\ell_i-1}_{b_i}$. Then come the edges of $\mathcal D_i$, starting with the edge containing $f^{\ell_i}_{b_i}$ and ending with the one containing $f^{s_i}_{b_{i+1}}$, after which $\mathcal H_{b_{i+1}}'(n)$ follows. In this ordering the intersections $E_i \cap E_{i+1} = : f_i$ have exactly two elements since they are either within a participating chain or one of the pairs $f_{b_i}^{s_i}$ or $f_{b_i}^{\ell_i}$ connecting two of these chains which are disjoint otherwise.
Furthermore $f_i$ is not contained in any other $E_j$, $j\neq i, i+1$ since in $U_i$ 
each vertex is in at most two hyperedges, and otherwise, by Claim~\ref{claim:length} each pair $\{x_i,z_{i+2b}\}$ appears in a hyperedge of some chain $\mathcal H_b(n)$ for a unique $b\in B$ (and is disjoint from each $U_i$).
\end{proof}

Next we also verify condition (i) of the Key Lemma for 
$\mathcal H'_B(n)$. 
\begin{proposition}[$\mathcal H'_B(n)$ is induced $K_5^-$-free]\label{prop:H'(B)IsClosed}
Suppose that $B=10B'$ for $B'$ $3$-AP-free. Then $\mathcal H'_B(n)$ is induced $K^-_5$-free.
\end{proposition}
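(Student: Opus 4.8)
The plan is to reduce the induced $K_5^-$-freeness of $\mathcal H'_B(n)$ to that of $\mathcal H_B(n)$ (Lemma~\ref{Lemma_GB_is_K5_closed}) and of the turning gadget chains $\mathcal D_i$ (Lemma~\ref{lemma:KeyChain}(i)), by showing that any copy of $K_5^-$ in the $2$-skeleton $G'_B$ of $\mathcal H'_B(n)$ must live entirely inside one of these constituent pieces. Since $\mathcal H'_B(n)$ is built from the pruned chains $\mathcal H'_{b_j}(n)\subseteq\mathcal H_{b_j}(n)$ together with the chains $\mathcal D_i$, whose interiors use the fresh, pairwise-disjoint $7$-element sets $U_i$, the $2$-skeleton $G'_B$ is a subgraph of $G_B$ together with the extra edges induced inside the $\mathcal D_i$. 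The key structural observation I would record first is that the only vertices shared between a $\mathcal D_i$ and the rest of the hypergraph are the four vertices of $f^{\ell_i}_{b_i}\cup f^{s_{i+1}}_{b_{i+1}}$, i.e. two endpoints at each end of the gadget; everything in $U_i$ is private to $\mathcal D_i$.

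The main step is then a connectivity/overlap argument. Suppose $K$ is a copy of $K_5^-$ in $G'_B$ with vertex set $V(K)$, so $G'_B[V(K)]$ contains a triangle (in fact $K_5^-$ contains several triangles, and in particular every vertex of $K$ lies in a triangle of $K$). First I would handle any vertex of $V(K)$ lying in some $U_i$: if $V(K)$ meets $U_i$, then since $U_i$ is separated from the rest of $G'_B$ by the $4$-vertex cut $f^{\ell_i}_{b_i}\cup f^{s_{i+1}}_{b_{i+1}}$, and $|V(K)|=5$ while $K_5^-$ is $2$-connected, a short case analysis (a $K_5^-$ cannot be split by a cut of size at most $4$ into two nonempty parts one of which sits inside $U_i$) forces $V(K)\subseteq V(\mathcal D_i)$; then apply Lemma~\ref{lemma:KeyChain}(i) to conclude $V(K)$ lies in an edge of $\mathcal D_i\subseteq\mathcal H'_B(n)$. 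Actually the cleanest way: $K_5^-$ remains connected after deleting any two vertices, so it cannot have vertices strictly inside $U_i$ and strictly outside $V(\mathcal D_i)$ simultaneously unless the $\leq 4$ boundary vertices separate it, which is impossible since $K_5^-$ minus any $\leq 2$ vertices is connected and here we'd need to remove up to... — I would just verify directly that if $V(K)$ has a vertex in $U_i$ and a vertex outside $V(\mathcal D_i)$ then the at most four boundary vertices would have to form a vertex cut of $K$, contradicting that $K_5^-$ has no cut of size $<3$ separating it into parts of sizes summing to $\leq 1$ on one side — equivalently just note $|U_i|=7>5$ is irrelevant and argue via the explicit small cases. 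The remaining case is $V(K)\cap(\bigcup_i U_i)=\emptyset$, so $V(K)\subseteq V(\mathcal H_B(n))$ and every edge of $K$ present in $G'_B$ is also present in $G_B$ (edges inside a $\mathcal D_i$ not using $U_i$ are edges on $f^{\ell_i}_{b_i}$ or $f^{s_{i+1}}_{b_{i+1}}$, which already are edges of the chain $\mathcal H_{b_i}(n)$ or $\mathcal H_{b_{i+1}}(n)$, hence of $G_B$). Thus $K$ is a copy of $K_5^-$ in $G_B$, and Lemma~\ref{Lemma_GB_is_K5_closed} gives that $V(K)$ is contained in an edge of $\mathcal H_B(n)$; since $\mathcal H'_{b_j}(n)\subseteq\mathcal H_{b_j}(n)$, I must additionally check that such an edge, if it is pruned away in passing to $\mathcal H'_B(n)$, is compensated — but in fact I only need that $V(K)$ lies in \emph{some} edge of $\mathcal H'_B(n)$, and here I should be slightly careful: a $K_5^-$ using only the endpoint-pairs could in principle sit across the "seam". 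I expect the honest resolution is that the only edges of $G'_B$ not in $G_B$ are inside the $\mathcal D_i$'s, handled above, and any $K_5^-$ fully inside $G_B$ that also lives in the pruned $\mathcal H'_B(n)$ is fine because the pruning only removed \emph{hyperedges}, not any of the $2$-skeleton edges that are needed — so the $K_5^-$ was already contained in a single $\mathcal H_b(n)$-hyperedge $E_{i,b}$ with $s_j\le i\le \ell_j-1$ (it cannot be a pruned hyperedge at the very ends precisely because those would use an $f$-pair that now only appears inside a $\mathcal D_i$, putting a $U_i$-vertex in reach). I would spell this boundary bookkeeping out carefully.

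The main obstacle I anticipate is exactly this last boundary bookkeeping: making precise that a $K_5^-$ in $G'_B$ cannot straddle the junction between a pruned chain $\mathcal H'_{b_j}(n)$ and an adjacent turning gadget $\mathcal D_i$. The cleanest route is probably to prove the separation lemma ``for each $i$, the set $f^{\ell_i}_{b_i}\cup f^{s_{i+1}}_{b_{i+1}}$ (four vertices) separates $U_i$ from $V(\mathcal H'_B(n))\setminus(U_i\cup f^{\ell_i}_{b_i}\cup f^{s_{i+1}}_{b_{i+1}})$ in $G'_B$'' using the definitions of $\mathcal D_i$ and the disjointness of the $U_i$, then combine it with the elementary fact that $K_5^-$ has no vertex cut $S$ with $|S|\le 2$, and more specifically that one cannot write $V(K_5^-)$ as $A\sqcup S\sqcup B$ with $A,B\neq\emptyset$, $|S|\le 4$, $|A|\le 2$, and no edges between $A$ and $B$ — in fact with $|A|=1$ this needs a nonneighbour of the single vertex, and a vertex of $K_5^-$ has at most one nonneighbour, so the ``outside'' part would be a single vertex which is impossible once we also track that the triangle $T$ of $K$ must be on one side. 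This is all finite case-checking on the fixed small graph $K_5^-$ and on the explicit $7$-vertex gadget, so no real number theory re-enters beyond what Lemma~\ref{Lemma_GB_is_K5_closed} already used; the write-up risk is purely in being exhaustive about the seam.
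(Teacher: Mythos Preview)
Your high-level split --- case on whether $V(K)$ meets $U=\bigcup_i U_i$ --- matches the paper's, but the connectivity route you outline has a real gap. A pure cut argument on $K_5^-$ cannot work: the three degree-$4$ vertices form a $3$-cut separating the two degree-$3$ vertices into singletons, so the partition $|A|=|B|=1$, $|S|=3$ with no $A$--$B$ edges is perfectly compatible with the abstract structure of $K_5^-$. What actually kills this is an observation you never isolate: every vertex of $U$ has at most two neighbours in $V=X\cup Y\cup Z$, because its $V$-neighbours lie inside a single two-element set $f_{b_i}^{s_i}$ or $f_{b_i}^{\ell_i}$. The paper builds the whole proof around this degree-$2$ bound, doing a direct case analysis on $|W|=|V(K)\cap U|\in\{0,1,2,3,4,5\}$ and counting forced non-edges. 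This is also cleaner than your two-step plan of first proving $V(K)\subseteq V(\mathcal D_i)$ and then invoking Lemma~\ref{lemma:KeyChain}(i): that second step is not immediate either, since $G'_B$ restricted to $V(\mathcal D_i)$ may contain edges among the four boundary vertices $f_{b_i}^{\ell_i}\cup f_{b_{i+1}}^{s_{i+1}}$ that are absent from the $2$-skeleton of the chain $\mathcal D_i$.

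On the seam in the $|W|=0$ case: you are right to be uneasy --- ``subhypergraph of induced $K_5^-$-free is induced $K_5^-$-free'' is false in general, and the paper's own phrasing glosses over this. But your proposed fix (pruned hyperedges ``use an $f$-pair that now only appears inside a $\mathcal D_i$'') is not the right mechanism: the $f$-pairs sit at the ends of the \emph{kept} chain, not in the pruned part. The clean resolution is that if $V(K)=E_{i,b}$ is a pruned hyperedge, then each of the four pairs $\{y_{i+b},v\}$ with $v\in E_{i,b}\setminus\{y_{i+b}\}$ lies in $E_{i,b}$ and in no other hyperedge of $\mathcal H_B(n)$ (an easy check using that distinct elements of $B=10B'$ differ by at least $10$); hence $y_{i+b}$ has degree $0$ on $E_{i,b}$ in $G'_B$, contradicting $\delta(K_5^-)=3$.
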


\begin{proof}
Let $S_i \subseteq U_{i-1}$ and $L_i\subseteq U_i$ be the three-element sets, such that $f_{b_i}^{s_i} \cup S_i$ and $f_{b_i}^{\ell_i} \cup L_i$ form a hyperedge of $\mathcal D := \cup_{i=1}^{t-1} \mathcal D_i$. Then, by our construction, edges of the $2$-skeleton of $\mathcal H_B'(n)$ between $U: = \cup_{i=1}^{t-1} U_i$ and $V:= X\cup Y \cup Z$ only go between the sets $f_{b_i}^{s_i}$ and $S_i$ and 
between the sets $f_{b_i}^{\ell_i}$ and $L_i$. In particular, every vertex of $U$ has degree at most two into $V$.

Let $K$ be a copy of $K_5^-$ with vertex set $W \cup I$, where 
$W= U \cap V(K)$ and $I = V \cap V(K)$. 
We classify the cases according to $|W|$ and, unless
$|W|= 0, 5$, or $3$ and $W=S_i$ or $L_i$, arrive to a contradiction 
with the fact that in $K$ there is {\em one non-edge}.

If $W$ is empty then $V(K)$ is fully contained in $V$,
on which the $2$-skeleton of $\mathcal D$ only induces the edges $f_{b_i}^{s_i}$ and $f_{b_i}^{\ell_i}$, and hence the $2$-skeleton of 
$\mathcal H_B'(n)$ on $V$ is the same as the $2$-skeleton of $\cup_{b\in B} \mathcal H_b'(n)$, which is induced $K_5^-$-free as the subhypergraph of the induced $K_5^-$-free hypergraph  $\mathcal H_B(n)$ (see  Lemma~\ref{Lemma_GB_is_K5_closed}).

If $W$ consists of the single vertex $v$, then $v$ has at most two edges into the set $V$, so at least two non-edges to the set $I$. 

If $W$ consists of two vertices, then both of them have at least one non-edge towards the three vertices of $I$, contradicting that $K$ has at most one non-edge.

If $W$ consists of three vertices then, unless it is equal to a single $S_i$ or $L_i$, 
each of the two vertices in $I$ will have at least one non-edge towards $W$, contradicting that $K$ has only one non-edge. If $W = S_i$ (or $W=L_i$), then both vertices of $I$ must be in $f_{b_i}^{s_i}$ (or in $f_{b_i}^{\ell_i}$, respectively),  otherwise there are at least three non-edges in $K$. In this case $V(K)$ is $f_{b_i}^{s_i}\cup S_i$ (or $f_{b_i}^{\ell_i}\cup L_i$) which is a hyperedge of $\mathcal D$.

If $W$ contains four vertices and it contains some $S_i$ or  $L_i$, then 
it induces at least one non-edge of $K$ and the vertex $v \in I$ also has one non-edge towards $W$, which is at least two non-edges in $K$, a contradiction. Otherwise $W$ does not contain any $S_i$ or $L_i$ and then $v$ has at least two non-edges towards $W$.

Finally if $|W| =5$, then $K$ is part of $\mathcal D$, which is induced $K_5^-$-free as it is the disjoint union of chains.
\end{proof}

\begin{proof}[Proof of Theorem~\ref{thm:mainK5}]
Let $B'\subseteq [n/400]$ be a $3$-AP-free subset of size $r_3(n/400)$. Then 
for the set $B=10B' \subset [n/40]$ the hypergraph $\mathcal H_B'(n/10)$  satisfies condition (ii) of the Key Lemma by Proposition~\ref{prop:simple-propertiesK5}(c) and Condition (i) by Proposition~\ref{prop:H'(B)IsClosed}. Consequently, by parts (a) and (b) of Proposition~\ref{prop:simple-propertiesK5}, for large enough $n$
\begin{align*}
 M_5(n)&\geq M_5(|V({\mathcal H'_B(n/10)})|) \geq |{\mathcal H'_B(n/10)}|\geq n|B|/20-8|B|^2\\ 
&\geq n|B|/30\geq nr_3(n)/1200,
\end{align*}

since $r_3(n/400) \geq r_3(n)/400$ by the Pigeonhole Principle.
\end{proof}

\bigskip

\section*{Acknowledgements} 
This work was initiated during the  Oberwolfach Mini-Workshop on Positional Games in October 2018. 
The authors would like to thank the participants of the workshop, and in particular to Asaf Ferber, for helpful discussions.

\bigskip

\end{document}